\newtheorem{thm}{Theorem}[section]
\newtheorem{lem}[thm]{Lemma}
\theoremstyle{definition}
\newtheorem{rem}[thm]{Remark}
\newtheorem*{rem*}{Remark}
\newtheorem{ex}{Example}
\newtheorem{obs}{Observation}
\numberwithin{equation}{section}
\definecolor{OrangeRed}{cmyk}{0,0.6,1,0}            
\definecolor{DarkBlue}{cmyk}{1,1,0,0.20}
\definecolor{DarkGreen}{cmyk}{1,0,0.6,0.2}
\definecolor{myblue}{rgb}{0.66,0.78,1.00}
\definecolor{Violet}{cmyk}{0.79,0.88,0,0}
\definecolor{Lavender}{cmyk}{0,0.48,0,0}
\renewcommand{\red}{\color{black}}
\def\blfootnote{\gdef\@thefnmark{}\@footnotetext}
\newcommand{\diam}{\operatorname{diam}}
\newcommand{\dist}{\operatorname{dist}}
\newcommand{\C}{{\mathbb C}}
\newcommand{\D}{{\mathbb D}}
\newcommand{\N}{{\mathbb N}}
\newcommand{\R}{{\mathbb R}}
\newcommand{\lran}{\underset{n\to\infty}{\longrightarrow}}
\renewcommand{\phi}{\varphi}
\newcommand{\eps}{\varepsilon}
\renewcommand{\tilde}{\widetilde}
\title{Oscillating simply connected wandering domains}
\author{Vasiliki Evdoridou, Philip J. Rippon and Gwyneth M. Stallard}
\date{}
\begin{document}
\maketitle
\begin{abstract}
Although detailed descriptions of the possible types of behaviour inside periodic Fatou components have been known for over 100 years, a classification of wandering domains has only recently been given. Recently, simply connected wandering domains were classified into nine possible types and examples of {\it escaping} wandering domains of each of these types were constructed. Here we consider the case of {\it oscillating} wandering domains, for which only six of these types are possible. {\red We use a new technique based on approximation theory to construct examples of all six types of oscillating simply connected wandering domains.} This requires delicate arguments since oscillating wandering domains return infinitely often to a bounded part of the plane. Our technique is inspired by that used by Eremenko and Lyubich to construct the first example of an oscillating wandering domain, but with considerable refinements which enable us to show that the wandering domains are bounded, to specify the degree of the mappings between wandering domains and to give precise descriptions of the dynamical behaviour of these mappings.
\end{abstract}

\section{Introduction}
\blfootnote{\hspace*{-0.6cm}All authors were supported by Engineering and Physical Sciences Research Council grant EP/R010560/1.\\2010 Mathematics Subject Classification. Primary 37F10; Secondary 30D05.}
Let $f$ be a transcendental entire function. We consider the iterates of $f$, which we denote by $f^n$, $n \geq 1$. The complex plane is divided into two sets: the Fatou set, $F(f)$, where the iterates $(f^n)$ form a normal family in a neighbourhood of every point, and its complement, the Julia set $J(f)$. An introduction to the theory of iteration of transcendental entire and meromorphic functions can be found in \cite{bergweiler93}.

The Fatou set is open and consists of connected components, which are called \textit{Fatou components}. Fatou components can be periodic, preperiodic or wandering domains. A Fatou component $U$ is called a \textit{wandering domain} if $f^n(U) \cap f^m(U)= \emptyset,$ for all $n\neq m$. Although Sullivan showed in \cite{sullivan} that rational maps have no wandering domains, transcendental entire functions can have wandering domains. The first example of such a function was given by Baker \cite{Baker76} who proved that a {\red certain} entire function given by an infinite product has a multiply connected wandering domain. Several examples of simply connected wandering domains have been constructed since then (see, for example,~\cite[p. 104]{Herman}, \cite[p. 414]{sullivan}, \cite[p. 564, p. 567]{Baker-wd}, \cite[p. 222]{Devaney-entire}, \cite[Examples 1 and 2]{pathex}, \cite{fagella-henriksen}).

In \cite{BRS} the authors gave a complete description of the dynamical behaviour in multiply connected wandering domains. Recently, in \cite{BEFRS} the authors gave a detailed classification of simply connected wandering domains in terms of the hyperbolic distance between orbits of points and in terms of convergence to the boundary. More specifically, they classified simply connected wandering domains into \textit{contracting}, \textit{semi-contracting} and \textit{eventually isometric} depending on whether, for almost all pairs of points in the wandering domain, the hyperbolic distances between the orbits of these points, tend to 0, decrease but do not tend to 0, or are eventually constant, respectively. In terms of convergence to the boundary, the orbits of all points \textit{stay away} from the boundary, come arbitrarily close to the boundary but do not converge to it (\textit{bungee}), or \textit{converge} to the boundary. These two classifications give nine possible types of simply connected wandering domains. Using a new technique, based on approximation theory, they show that all of these nine possible types are indeed realisable.

All the examples constructed in \cite{BEFRS} were {\it escaping} wandering domains. Hence it is natural to ask whether there exist {\it oscillating} wandering domains of all nine types. (It remains a major open question as to whether it is possible to have wandering domains of bounded orbit.) We first recall that {\red in oscillating wandering domains the iterates of~$f$ have finite limit points, as well as $\infty$, so it is impossible for the orbit of a point in such a wandering domain to stay away from the boundary.} Thus three of the nine possible types are not realisable.  In this paper we show that the remaining six possible types of oscillating wandering domains are all realisable.

The first transcendental entire function with oscillating wandering domains was given by Eremenko and Lyubich in \cite{pathex}; this was also the first application of approximation theory in complex dynamics. The authors used sequences of discs and half-annuli and a model function which was constant on the half-annuli and a translation on the discs. This model function was approximated on the closure of every disc and half-annulus by a transcendental entire function using an extended version of Runge's approximation theorem. {\red Their technique did not show though whether their wandering domains are bounded or not, and did not give information on the degree of the entire function on each of the wandering domains.}

Motivated by the construction in \cite{pathex}, we adapt the new techniques from \cite{BEFRS} to construct bounded oscillating wandering domains, which, moreover, have the property that the degree of~$f$ on each of the wandering domains is equal to that of our model map. We then use this technique to construct the six {\red possible} types of such wandering domains.

We prove our main result Theorem~\ref{thm:main construction} in Section~3. It is worth pointing out that, in order for the wandering domains to be oscillating, the set up needs to be much more complicated than that used for escaping wandering domains in \cite{BEFRS}. Although some of the building blocks of our proof are similar to those used in the analogous result for escaping wandering domains, the proof here requires several additional techniques. In particular, great care has to be taken over the accumulating errors in the approximation, as each of the discs $D_n$ on which the approximation takes place contains infinitely many domains in the orbit of the wandering domain. {\red Throughout, $D(z,r)$ denotes the open disc with centre~$z$ and radius~$r$.}

\begin{thm}[Main construction]\label{thm:main construction}
 Let $(b_{n})_{n\geq0}$ be {a sequence of} Blaschke products of {corresponding} degree $d_n \geq 1$, and let $(\alpha_n)_{n\geq 0}$ be a sequence of real numbers with $\alpha_0 = 1$ and $\alpha_{n+1}/\alpha_n \leq 1/6$. For $n \geq 0$, let
 \[
 D_n=D(9n,\alpha_n),\]
 \[ \Delta_n=D(a_n,\alpha_{n}) \;\mbox{ and } \Delta'_n=D(a_n,2\alpha_{n}), \mbox{ where } a_n=9n+4\alpha_{n},\]
and
 \[ G_n=D(\kappa_n, 1) \;\mbox{ and }\;\; G_n'= D(\kappa_n,5/4), \mbox{ where }  \kappa_n=a_n+3.
 \]
 We consider the function
$$
\phi(z) =
\begin{cases}
z+9,\;\;\mbox{ if } z\in \overline{D_n}, \; n \geq 0,\\
\frac{z-a_n}{\alpha_n}+\kappa_n, \;\; \mbox{ if } z \in \overline{\Delta_n'},\; n \geq 0,\\
\alpha_{n+1} b_n({z-\kappa_n})+4\alpha_{n+1}, \;\;\mbox{ if }z \in \overline{G_n'},\; n \geq 0,
\end{cases}$$
   and the sets
   \[
 V_m= D(\zeta_m, \rho_m) = {\red \phi^m(\Delta_0)=}
\begin{cases}
\Delta_n, \; \mbox{ if } m=\ell_n -1, n \geq 0,\\
G_n,\; \mbox{ if } m=\ell_n, n \geq 0,\\
D(9k + 4\alpha_{n+1},\alpha_{n+1}) \subset D_k, \; \mbox{ if } m = \ell_n + k+1,   0 \leq k \leq n,
\end{cases}
\]
where $(\ell_n)$ is defined by $\ell_0=1 \mbox{ and } \ell_{n+1}=\ell_n+n+3, \; n \geq 0.$
\begin{figure}[hbt!]\label{Fig1}
\centering
\begin{tikzpicture}[scale=2] 




\draw[black] (-4.7,0) circle [radius=0.6];
\draw[black] (-3.1,0) circle [radius=0.6];
\draw[black] (-1.5,0) circle [radius=0.6];
\draw[gray](-4.3,0) circle [radius=0.15];
\draw[gray](-4.6,0) circle [radius=0.06];
\draw[black] (-0.5,0) circle [radius=0.15];
\draw[black] (0,0) circle [radius=0.15];
\draw[black] (1, 0) circle [radius=0.6];
\draw [->] (-2,0.2) to [out=130,in=60] (-4.35,0);
\draw [->] (-4.35,-0.15) to [out=295,in=230] (0,-0.2);
\draw [->] (-2.65,-0.15) to [out=310,in=230] (-2,-0.1);
\draw [->] (0.05,-0.1) to [out=310,in=230] (0.55,-0.1);
\draw [->] (0.7,0.2) to [out=140,in=50] (-4.55,0.1);
\draw[fill] (-4.7,0) circle [radius=0.01];
\draw[fill] (-1.5,0) circle [radius=0.01];
\draw[fill] (-3.1,0) circle [radius=0.01];
\draw[fill] (1,0) circle [radius=0.01];

\node at (-4.7,-0.1) {$0$};
\node at (-3.1,-0.1) {$a_0=4$};
\node at (-1.5,-0.1) {$\kappa_0=7$};
\node at (1.05,-0.1) {$\kappa_1=a_1+3$};
\node at (-4.74,-0.75) {$D_0$};
\node at (-0.5,-0.3) {$D_1$};
\node at (0,0.25) {$\Delta_1=V_3$};
\node at (-4.3,0.25) {$V_2$};
\node at (-4.65,0.15) {$V_5$};
\node at (-3.1,0.35) {$\Delta_0=V_0$};
\node at (-1.45,0.75) {$G_0=V_{\ell_0}=V_1$};
\node at (0.95,0.75) {$G_1=V_{\ell_1}=V_4$};
\end{tikzpicture}
\vspace*{-0.5cm}
\caption{The action of the {\red model function $\phi$, discussed further in Section~3.}} 
\end{figure}
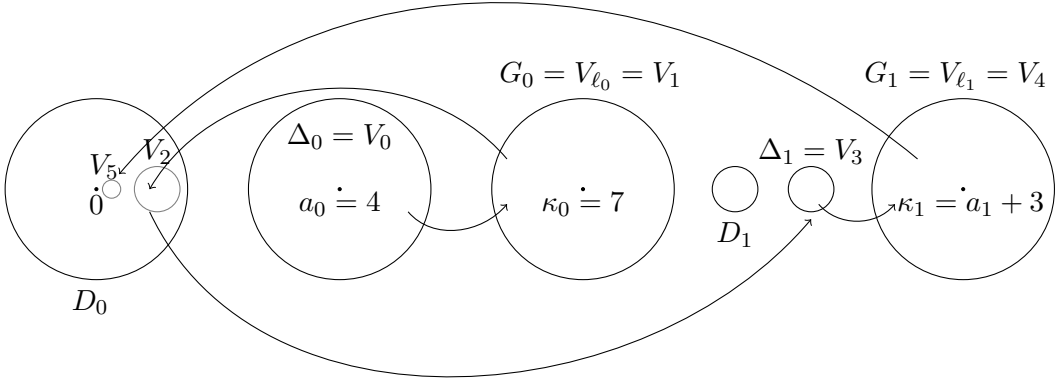

For a suitable choice of $(\alpha_n)$, there exists a transcendental entire function~$f$ having an orbit of bounded, simply connected, oscillating, wandering domains $U_m$
such that, for $m,n\geq 0$,
\begin{itemize}
\item[(i)]  $\overline{D(\zeta_m,r_m)} \subset U_m \subset D(\zeta_m, R_m)$, where $0<r_m<\rho_m<R_m$, and $r_m\sim \rho_m$ and $R_m \sim \rho_m$ as $m \to \infty$;
\item[(ii)] $|f(z)-\phi(z)|\leq \varepsilon_m$ on $\overline{D(\zeta_m, R_m)}$, where $\varepsilon_0 \leq 1/24$ and $\varepsilon_{\ell_n+k} = \frac{\alpha_{n+1}^2}{2^{k+1}}$, for $0\leq k \le n+2$;
\item[(iii)] $f(9n)=\phi(9n)=9(n+1)$ and $f'(9n)=\phi'(9n)= 1$;
\item[(iv)] $f:U_{m} \to U_{m+1}$ has degree $q_{m},$ where $q_{\ell_n}=d_n$, and $q_m=1$ otherwise.
\end{itemize}

 Finally, if $z, z' \in U_0$  and there exists $N \in \N$ such that $f^{\ell_N}(z), f^{\ell_N}(z') \in \overline{D(\kappa_N,r_{\ell_N})}$, then, for $n \geq N$, we have
\begin{equation}\label{eqtn:double inequality}
 k_n\operatorname{dist}_{G_n}(f^{\ell_n}(z), f^{\ell_n}(z')) \leq \operatorname{dist}_{U_{\ell_n}}(f^{\ell_n}(z), f^{\ell_n}(z'))\leq  K_n \operatorname{dist}_{G_n}(f^{\ell_n}(z), f^{\ell_n}(z')),
\end{equation}
where  $0<k_n<1<K_n$ with $k_n,K_n \to 1$ as $n \to \infty$.
\end{thm}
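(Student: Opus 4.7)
The natural strategy is to build the entire function $f$ by applying an approximation theorem (either Arakelyan's theorem on the closed unbounded set
\[
K=\bigcup_{n\geq 0}\bigl(\ov{D_n}\cup \ov{\Delta_n'}\cup \ov{G_n'}\bigr)
\]
with the model $\phi$ as the target, or an iterated application of Runge's theorem on a carefully chosen exhaustion) together with Hermite-type interpolation conditions $f(9n)=9(n+1)$, $f'(9n)=1$ at the centres of the $D_n$. One must verify first that $K$ has connected complement and locally bounded geometry (the assumption $\alpha_{n+1}/\alpha_n\leq 1/6$, together with the explicit placement $a_n=9n+4\alpha_n$, $\kappa_n=a_n+3$, ensures the sets $\ov{D_n}$, $\ov{\Delta_n'}$, $\ov{G_n'}$ are disjoint and well-separated); then a standard strengthening of Arakelyan/Mergelyan gives an entire $f$ with $|f-\phi|\leq \eta_n$ on the $n$th component of $K$, for any prescribed $(\eta_n)$. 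One chooses these bounds so as to force the error estimate (ii), which is strong enough to iterate.

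The heart of the argument is then an inductive orbit-tracking statement: provided $(\alpha_n)$ decays sufficiently fast (stronger than $\alpha_{n+1}/\alpha_n\leq 1/6$), for each $m$ there exist radii $r_m<\rho_m<R_m$, asymptotic to $\rho_m$, such that $f$ maps $\ov{D(\zeta_m,r_m)}$ into $D(\zeta_{m+1},r_{m+1})$ and $D(\zeta_m,R_m)$ into $D(\zeta_{m+1},R_{m+1})$. For $m$ of the form $\ell_n-1$ or $\ell_n$ (the $\Delta_n$-to-$G_n$ and $G_n$-to-$\Delta$-step transitions) this follows from the approximation bound and the explicit form of $\phi$ there (a linear map, respectively a Blaschke composition). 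For $m=\ell_n+k+1$ (the passage through $D_k$) one uses that $\phi$ acts by translation and that $\eps_m$ is summable along the orbit; the target wandering domain $U_m$ is then taken to be the component of $\bigcap_{j\geq 0} f^{-j}(D(\zeta_{m+j},R_{m+j}))$ containing $D(\zeta_m,r_m)$, giving (i). The degree statement (iv) is a Rouch\'e/argument-principle computation: on $\ov{G_n'}$ the error $\eps_{\ell_n}=\alpha_{n+1}^2/2$ is much smaller than the modulus of variation of $\alpha_{n+1}b_n(z-\kappa_n)+4\alpha_{n+1}$ on the boundary of $U_{\ell_n}$, so $f$ takes each value in $U_{\ell_n+1}$ with the same multiplicity as $\phi$, namely $d_n$; at all other steps $\phi$ is conformal and the same argument gives degree~$1$.

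For the concluding hyperbolic inequality (\ref{eqtn:double inequality}), the idea is to exploit that $U_{\ell_n}$ is squeezed between $D(\kappa_n,r_{\ell_n})$ and $D(\kappa_n,R_{\ell_n})$, both concentric with $G_n=D(\kappa_n,1)$, with $r_{\ell_n},R_{\ell_n}\to 1$. The Schwarz--Pick comparison for nested simply connected domains then yields
\[
\tfrac{\dist_{D(\kappa_n,R_{\ell_n})}(\cdot,\cdot)}{\dist_{G_n}(\cdot,\cdot)}\;\leq\;\tfrac{\dist_{U_{\ell_n}}(\cdot,\cdot)}{\dist_{G_n}(\cdot,\cdot)}\;\leq\;\tfrac{\dist_{D(\kappa_n,r_{\ell_n})}(\cdot,\cdot)}{\dist_{G_n}(\cdot,\cdot)},
\]
and the outer ratios tend to $1$ uniformly on the hypothesised compact subset $\ov{D(\kappa_n,r_{\ell_n})}$ because $R_{\ell_n}/1,\;r_{\ell_n}/1\to 1$; this is precisely $k_n,K_n\to 1$.

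The principal obstacle, and the main difference from the escaping case in \cite{BEFRS}, is that every disc $D_k$ is revisited infinitely often, once for each $n\geq k$, so the single approximation error on $D_k$ is charged to infinitely many iteration steps. The geometric decay $\eps_{\ell_n+k}=\alpha_{n+1}^2/2^{k+1}$ is exactly what is needed to make the telescoped error $\sum_{j\geq m}\eps_j$ small compared to $\rho_m$ so that orbits do not accidentally exit the target discs; guaranteeing this simultaneously with the degree requirement on $\ov{G_n'}$ will require tightening the decay of $(\alpha_n)$ beyond $1/6$ and choosing the approximation tolerances $\eta_n$ as a carefully balanced function of $(\alpha_n)$ and $(d_n)$.
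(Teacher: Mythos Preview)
Your overall strategy is close to the paper's, but there are two genuine gaps.

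First, and most seriously, you correctly identify that each $D_k$ is revisited infinitely often, yet you do not explain how a \emph{single} approximation bound on $D_k$ can yield the graded errors $\eps_{\ell_n+k+1}=\alpha_{n+1}^2/2^{k+2}$ for every $n\ge k$. Since $\alpha_{n+1}\to 0$, the infimum of these target errors over all visits to $D_k$ is zero, so no uniform tolerance $\eta_k$ on $D_k$ can suffice; your sentence ``one chooses these bounds so as to force the error estimate (ii)'' cannot be carried out. The paper's resolution is to combine the interpolation conditions $f(9k)=\phi(9k)$, $f'(9k)=\phi'(9k)$ with a Schwarz-type lemma (Lemma~\ref{lem:EL2}): if $g=f-\phi$ vanishes to second order at $9k$ and $|g|<\eps$ on $D_k=D(9k,\alpha_k)$, then $|g(z)|\le (\eps/\alpha_k^2)\,|z-9k|^2$. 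The orbit at step $\ell_n+k+1$ lies in $D(9k,6\alpha_{n+1})$, so the error there is at most $(\eps_{\ell_k+k+1}/\alpha_k^2)(6\alpha_{n+1})^2\le \alpha_{n+1}^2/2^{k+2}$. This quadratic improvement near the centre is the key mechanism you are missing; without it the inductive tracking cannot be closed.

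Second, your proposed $U_m$ (a component of $\bigcap_{j\ge 0}f^{-j}(D(\zeta_{m+j},R_{m+j}))$) lies in the Fatou set by Montel, but nothing in your argument shows that the actual Fatou component containing it is contained in $D(\zeta_m,R_m)$, which is precisely the boundedness claim in (i). The paper handles this via Theorem~\ref{WDexist}: it introduces auxiliary arcs $L_n$ just outside $\Gamma_{\ell_n}$ and a disc $D=D(-4,1)$ with $f(\ov D\cup\bigcup_n L_n)\subset D$, and the resulting attracting behaviour forces Julia set points between the $\Gamma_m$ and the $L_n$, pinning the wandering domain inside $\inter\Gamma_m$. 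Your construction has no such outer barrier. A smaller point: in your final step the ratio $\dist_{D(\kappa_n,r_{\ell_n})}/\dist_{G_n}$ does not tend to~$1$ uniformly on all of $\ov{D(\kappa_n,r_{\ell_n})}$, since the inner density blows up at its own boundary; one needs the orbits to lie in a strictly smaller disc $\ov{D(\kappa_n,s_n)}$ with $1-r_{\ell_n}=o(1-s_n)$, which the paper arranges through its inductive choice of $r_{\ell_n}$ and Lemma~\ref{lem:hyp estimate}.
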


\begin{rem}
If the Blaschke products $b_n$ are real-symmetric for each $n \geq 0$, then $f$ can be taken to be real-symmetric; see Remark~\ref{rem2}.
\end{rem}

In Section 5 we use Theorem \ref{thm:main construction} to construct all six types of oscillating wandering domains, proving the following result. This requires several preliminary results concerning Blaschke products which we prove in Section 4.

\begin{thm}\label{realizable}
For each of the six possible types of simply connected oscillating wandering domains, there exists a transcendental entire function with a bounded, simply connected oscillating wandering domain of that type.
\end{thm}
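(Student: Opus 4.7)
My plan is to apply Theorem~\ref{thm:main construction} directly, choosing the sequence $(b_n)$ of Blaschke products so that the resulting bounded oscillating wandering domain realises each of the six possible types in turn. The first step is to observe that the construction essentially conjugates one full cycle of length $\ell_{n+1}-\ell_n$ on $G_n$ to the action of the single Blaschke product $b_n$ on $\D$: writing $w_n$ for the position of $f^{\ell_n}(z)-\kappa_n$ in~$\D$, the model map $\phi$ together with the approximation bounds (ii) gives $w_{n+1}=b_n(w_n)+o(1)$, while (\ref{eqtn:double inequality}) gives, for pairs $(z,z')$ whose orbit eventually enters $\overline{D(\kappa_N,r_{\ell_N})}$,
\[
\dist_{U_{\ell_n}}(f^{\ell_n}(z), f^{\ell_n}(z')) \;=\; (1+o(1))\,\dist_{\D}(w_n, w_n').
\]
This reduces the whole classification to the discrete dynamics $w_{n+1}=b_n(w_n)$ on $\D$.

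Under this reduction, the distance classification is governed by the degree sequence $(d_n)$. If every $d_n=1$, each $b_n$ is a Möbius isometry of~$\D$, so $\dist_{\D}(w_n,w_n')$ is constant and we are in the \emph{eventually isometric} case. If every $d_n\geq 2$, each $b_n$ strictly contracts by Schwarz--Pick, and by tuning the contraction factors we can force either $\dist_{\D}(w_n,w_n')\to 0$ (\emph{contracting}) or convergence to a strictly positive limit (\emph{semi-contracting}). The boundary classification is governed by the sequence $(|w_n|)$: the orbit converges to $\partial U_m$ iff $|w_n|\to 1$, and has \emph{bungee} behaviour iff $\liminf|w_n|<1=\limsup|w_n|$. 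Since oscillating wandering domains cannot have orbits that stay away from the boundary, these two trichotomies yield exactly the six types required.

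Section~4 should supply the Blaschke product lemmas that let us combine the two choices freely: (i) a parametrised family of degree-one Blaschke products (Möbius maps) used to steer $(w_n)$ through any prescribed target sequence in~$\D$, producing in particular either the convergent or the bungee boundary behaviour; (ii) a quantitative Schwarz--Pick lemma for degree-$\geq 2$ Blaschke products, allowing the product of contraction factors along the orbit to be sent to~$0$ or to a positive limit as desired; and (iii) a realisation lemma that, given targets for $b_n(w_n)$ and for the derivative at~$w_n$, produces a Blaschke product of the prescribed degree $d_n$ with those data. Section~5 will then, for each of the six types, assemble a sequence $(b_n)$ with the right degrees and parameters, feed it into Theorem~\ref{thm:main construction}, and verify via (i)--(iv) and (\ref{eqtn:double inequality}) that the hyperbolic and boundary behaviours fall in the required class for almost every pair of points in~$U_0$.

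The main obstacle will be the \emph{semi-contracting bungee} combination. There we need every $b_n$ of degree $\geq 2$, so that contraction occurs at every step; the product of Schwarz--Pick factors to converge to a positive limit, so that $\dist_{\D}(w_n,w_n')$ does not collapse to zero; and simultaneously $(|w_n|)$ to oscillate between values close to~$1$ and values bounded away from~$1$. Limiting the total contraction naturally pushes $(w_n)$ near a common attracting fixed point, but this tends to freeze $|w_n|$ at a single value and destroys bungee behaviour. Reconciling these forces is the delicate point: we will have to alternate the attracting fixed points of successive $b_n$'s between interior and near-boundary positions, while keeping their derivatives at $w_n$ close enough to $1$ that the cumulative Schwarz--Pick factor remains bounded below.
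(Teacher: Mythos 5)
Your overall strategy -- run Theorem~\ref{thm:main construction} with a tailored sequence $(b_n)$ and read off the classification from the induced dynamics $w_{n+1}\approx b_n(w_n)$ on $\D$ -- is exactly the paper's. The distance trichotomy is indeed governed by the degree sequence and by how fast the Schwarz--Pick contraction factors accumulate, and the paper's Section~4 supplies Blaschke-product estimates very much in the spirit of your items (ii) and (iii) (a parabolic-rate lemma, a geometric-rate lemma, and quantitative bounds for a one-parameter degree-two family fixing~$0$). So far, so good.

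However, your stated boundary criterion -- ``bungee iff $\liminf|w_n|<1=\limsup|w_n|$'' -- is wrong, and this error propagates into a phantom difficulty. You do not need $\limsup|w_n|=1$ at all. In the construction, for $m$ of the form $\ell_n+k+1$ the domain $U_m$ is trapped between circles of radius $\sim\alpha_{n+1}$ about $\zeta_m\in D_k$, and $\alpha_n\to 0$ superexponentially; consequently $\diam U_m\to 0$ along these ``transit'' times, so \emph{every} orbit automatically comes arbitrarily close to $\partial U_m$ along some subsequence, regardless of what $(w_n)$ does. The only thing left to decide is whether $\operatorname{dist}(f^{\ell_n}(z),\partial U_{\ell_n})$ also tends to~$0$: convergent behaviour iff $|w_n|\to 1$, bungee iff $\liminf|w_n|<1$. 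This is precisely Lemma~\ref{lem52}.

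Once this is corrected, your ``main obstacle'' -- the semi-contracting bungee case -- evaporates. There is no need to make $(|w_n|)$ oscillate between the centre and the boundary while fine-tuning attracting fixed points of alternating $b_n$'s. The paper simply takes $b_n=\tilde\mu_n\circ(\cdot)^2\circ\mu_n$ with $b_n(0)=0$ and $s_n\to 1$ so quickly that $\prod_n\lambda_n>0$ (Lemma~\ref{lem:semi}); the orbit of $4$ then stays within $\tfrac14$ of $\kappa_n$ for all $n$, so $\liminf|w_n|<1$ gives bungee essentially for free, while the convergent product of contraction factors keeps $\dist_{G_n}$ between the orbits of $4$ and $19/4$ bounded below, giving semi-contraction via Lemma~\ref{lem51}(b). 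Likewise the eventually-isometric bungee case is just $b_n(z)=z$, with $w_n$ near $0$ for all $n$ -- no oscillation of $|w_n|$ is required or arranged. Your plan would have you spend effort solving a problem that the shrinking intermediate domains already solve for you; absorb Lemma~\ref{lem52} into your criterion and the rest of your outline goes through.
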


Oscillating wandering domains for functions in the Eremenko-Lyubich class $\mathcal{B}$ have been constructed, first by Bishop in \cite{bishop15}, using the novel technique of quasiconformal folding, and more recently in \cite{FJL} and \cite{marshi}. It would be interesting to see whether their methods can be adapted to classify the resulting wandering domains as one of the six possible types described above.

\subsection*{Acknowledgments}
We would like to thank Anna Miriam Benini, Chris Bishop, Nuria Fagella and Lasse Rempe for inspiring discussions.
\section{Preliminary results for Theorem \ref{thm:main construction}}
 In this section we give some existing results which are used in the proof of Theorem \ref{thm:main construction}. The following theorem, which is \cite[Theorem D]{BEFRS}, plays a key role in the proof.

\begin{thm}\label{WDexist}
Let $f$ be a transcendental entire function and suppose that there exist Jordan curves $\gamma_n$ and $\Gamma_n$, $n\ge 0$, compact sets $L_k$, $k\ge 0$, and a bounded domain $D$ such that
\begin{itemize}
\item[\rm(a)] $\Gamma_n$ surrounds $\gamma_n$, for $n \geq 0$;
\item[\rm(b)] the sets $\Gamma_n$, $n \geq 0$, $L_k$, $k \geq 0$, and $\overline D$ all lie exterior to each other;
\item[\rm (c)]  $\gamma_{n+1}$ surrounds $f(\gamma_n)$, for $n \geq 0$;
\item[\rm (d)] $f(\Gamma_n)$ surrounds $\Gamma_{n+1}$, for $n \geq 0$;
\item[\rm (e)] $f(\overline D \cup \bigcup_{k\ge 0} L_k)\subset D$;
\item[\rm (f)] there exists $n_k \to \infty$ such that
$$
\max\{\operatorname{dist}(z,L_{k}): z \in \Gamma_{n_k}\} = o(\operatorname{dist}(\gamma_{n_k}, \Gamma_{n_k}))\;\text{ as}\; k \to \infty.
$$
\end{itemize}
Then there exists an orbit of simply connected wandering domains $U_n$ 
such that $\overline{\operatorname{int} \gamma_n} \subset U_n \subset \operatorname{int}\Gamma_n$, for $n \geq 0$.

Moreover, if there exists $z_n \in \operatorname{int}\gamma_n$ such that both  $f(\gamma_n)$ and $f(\Gamma_n)$ wind $d_n$ times around $f(z_n),$  then $f:U_n \to U_{n+1}$ has degree $d_n$, for $n \geq 0$.
\end{thm}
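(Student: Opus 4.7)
The plan is to construct $f$ as an explicit entire approximation of the model map $\phi$, with errors controlled on each disc in the orbit, and then to apply Theorem~\ref{WDexist}. First I would verify that the compact sets $\overline{D_n}$, $\overline{\Delta'_n}$, $\overline{G'_n}$, $n\geq 0$, are pairwise disjoint, using $\alpha_{n+1}/\alpha_n \leq 1/6$ together with the explicit spacing. Since $\phi$ is analytic on a neighbourhood of their union, I would apply an extension of Runge's approximation theorem --- augmented by Hermite-type interpolation at the points $9n$ to enforce $f(9n) = 9(n+1)$ and $f'(9n) = 1$ exactly (item~(iii)) --- to produce an entire $f$ whose error $|f-\phi|$ is uniformly bounded on each of these compact pieces by a prescribed sequence $(M_k)$. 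The problem then reduces to choosing $(M_k)$ and $(\alpha_n)$ carefully enough that everything that follows works.

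The heart of the argument is to translate these uniform bounds into the sharper pointwise bounds $\varepsilon_m$ required by~(ii) on the much smaller discs $\overline{D(\zeta_m, R_m)}$. The delicate case is $V_{\ell_n+k+1}\subset D_k$, since each $\overline{D_k}$ is visited by infinitely many wandering iterates at vastly different scales. Here the model is $\phi(z)=z+9$, so the Hermite conditions at $9k$ make $f(z)-\phi(z)=(z-9k)^2 h(z)$ vanish to second order there; by the maximum principle, a uniform bound $M_k$ on $\overline{D_k}$ yields a bound of order $M_k\alpha_{n+1}^2/\alpha_k^2$ on the sub-disc $\overline{D(\zeta_m, R_m)}$, which matches the required form by choosing $M_k$ of order $\alpha_k^2/2^{k+1}$ and arranging $(\alpha_n)$ to decay faster than any geometric rate. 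The analogous (easier) bounds on $\overline{\Delta'_n}$ and $\overline{G'_n}$ are handled similarly, and the resulting radii $r_m, R_m$ can be chosen with $r_m \sim \rho_m \sim R_m$ as in~(i).

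I would then verify the hypotheses of Theorem~\ref{WDexist} with $\gamma_m = \partial D(\zeta_m, r_m)$ and $\Gamma_m = \partial D(\zeta_m, R_m)$: conditions~(a)--(d) follow from~(i), (ii), and a Rouch\'e-type argument using $|f-\phi|\leq \varepsilon_m$ together with the explicit action of $\phi$ (which sends $\gamma_m$ inside $\gamma_{m+1}$ and $\Gamma_m$ around $\Gamma_{m+1}$). The bounded domain $D$ and compact sets $L_k$ required by~(e)--(f) --- which are what force the resulting wandering domains to be oscillating rather than escaping --- I would insert as additional disjoint pieces of the Runge target set, placed in the complement of the $\overline{D(\zeta_m, R_m)}$, chosen so that $f$ maps them inside $D$ and so that some subsequence accumulates on $\Gamma_{n_k}$. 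The degree count~(iv) then follows from the second statement of Theorem~\ref{WDexist}, since on $\overline{G'_n}$ the map $\phi$ winds $d_n$ times (as $b_n$ is a Blaschke product of degree $d_n$) while elsewhere $\phi$ acts conformally.

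Finally, for the hyperbolic estimate~\eqref{eqtn:double inequality}, the sandwich $\overline{D(\zeta_{\ell_n}, r_{\ell_n})} \subset U_{\ell_n} \subset D(\zeta_{\ell_n}, R_{\ell_n})$ with $r_{\ell_n}, R_{\ell_n} \to 1$ (the radius of $G_n$) gives, by Carath\'eodory kernel convergence, that the normalised Riemann maps $G_n \to U_{\ell_n}$ converge uniformly on compact subsets to the identity; hence the ratio of hyperbolic densities tends to $1$ uniformly on $\overline{D(\kappa_n, r_{\ell_n})}$, yielding $k_n, K_n \to 1$. The main obstacle throughout is the error accumulation just described: each $\overline{D_k}$ contains infinitely many iterates of vastly different scales, so a single uniform bound from the approximation must deliver simultaneously sharp bounds on all of them. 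Coordinating the Hermite conditions~(iii), the decay of $(\alpha_n)$, and the approximation error sequence $(M_k)$ is the delicate step that distinguishes this construction from the escaping case of BEFRS, where each disc is visited only finitely often.
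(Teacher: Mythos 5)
Your proposal proves the wrong theorem. Theorem~\ref{WDexist} is an abstract criterion: given a transcendental entire function~$f$ together with Jordan curves $\gamma_n$, $\Gamma_n$, compact sets $L_k$ and a bounded domain~$D$ satisfying conditions (a)--(f), conclude that simply connected wandering domains $U_n$ exist with $\overline{\operatorname{int}\gamma_n} \subset U_n \subset \operatorname{int}\Gamma_n$. Its hypotheses say nothing about a model map $\phi$, Blaschke products, the discs $D_n$, $\Delta_n$, $G_n$, or Runge-type approximation; those are the data of Theorem~\ref{thm:main construction}. What you have outlined is instead a proof of Theorem~\ref{thm:main construction}: approximate $\phi$ by an entire $f$ with controlled errors, verify that conditions (a)--(f) hold for the resulting $f$ and curves, and then --- in your own words --- ``apply Theorem~\ref{WDexist}.'' You cannot invoke the very theorem you were asked to prove; that is circular, and in any case it addresses a different statement.

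A proof of Theorem~\ref{WDexist} itself (it is Theorem~D of \cite{BEFRS} and is only quoted, not proved, in the present paper) has to argue directly from the hypotheses (a)--(f). Roughly: conditions (c) and (d) produce a nested chain of annular regions that traps the forward orbits of points in $\operatorname{int}\gamma_0$, so $\operatorname{int}\gamma_n$ lies in the Fatou set; condition (e) shows $\overline D$ and the $L_k$ all lie in a single absorbing Fatou component disjoint from the $U_n$; condition (f) shows the $L_k$ hug $\Gamma_{n_k}$ at a scale that is $o$ of the gap between $\gamma_{n_k}$ and $\Gamma_{n_k}$, which forces the Fatou component containing $\operatorname{int}\gamma_n$ to stay inside $\operatorname{int}\Gamma_n$; the wandering property and simple connectivity then follow because the $\Gamma_n$ are pairwise exterior Jordan curves, and the degree statement follows from the winding conditions via the argument principle. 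None of these steps appears in your proposal.
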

In order to obtain the transcendental entire function with the required properties, we consider an analytic function which is our model function and then apply the following result which is an extension of the well-known Runge's approximation theorem and was the Main Lemma in~\cite{pathex}.

\begin{lem}\label{Runge E-L}
Let $(E_n)$ be a sequence of compact subsets of $\C$ with the following properties:
\begin{itemize}
\item[(i)] $\C \setminus E_n$ is connected, for $n \geq 0$;
\item[(ii)] $E_n \cap E_m = \emptyset$, for $n\neq  m$;
\item[(iii)] $\min\{|z|: z\in E_n\} \to \infty$ as $n \to \infty$.
\end{itemize}
Suppose $\psi$ is holomorphic on $E =\bigcup_{n=0}^{\infty} E_n$. For $n \geq 0$, let $\varepsilon_n>0$ and let
$z_{n} \in E_n$. Then there exists an entire function $f$ satisfying, for $n \geq 0$,
\begin{equation}
|f(z)-\psi(z)|<\varepsilon_n, \quad \text{for } z\in E_n;
\end{equation}
\begin{equation}
f(z_{n})=\psi(z_{n}), \quad f'(z_{n}) = \psi'(z_{n}).
\end{equation}
\end{lem}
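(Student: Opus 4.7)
The plan is to construct $f$ as an infinite series $f = \sum_{n=0}^{\infty} p_n$ of polynomials, built inductively so that each partial sum $f_n = \sum_{k=0}^{n} p_k$ already satisfies the interpolation conditions at $z_0,\ldots,z_n$ and approximates $\psi$ well on $E_0 \cup \cdots \cup E_n$, while every later polynomial $p_k$ ($k>n$) is tiny on $K_n := E_0 \cup \cdots \cup E_n$ and vanishes to order two at $z_0,\ldots,z_n$. This is the standard series-of-polynomials format for Runge-type theorems; the genuine novelty beyond classical Runge is the simultaneous $C^1$-interpolation at the specified points $z_n$.

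\emph{Setup.} Using hypothesis (iii) together with disjointness (ii), after a harmless reindexing I would select an exhaustion by disks with $R_n \to \infty$ such that $K_n \subset D(0,R_n)$ while $E_k \cap \overline{D(0,R_n)} = \emptyset$ for every $k > n$. Hypothesis (i) says $\C \setminus E_n$ is connected for each $n$, and because the $E_n$ are pairwise disjoint compact sets separated by the circles $\{|z|=R_n\}$, a standard Alexander-duality argument (or direct inductive check) shows that both $K_n$ and the larger set $L_n := \overline{D(0,R_{n-1})} \cup E_n$ have connected complement. Classical Runge approximation therefore applies to any function holomorphic on a neighbourhood of $L_n$.

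\emph{Inductive step.} Suppose $p_0,\ldots,p_{n-1}$ have been fixed so that $f_{n-1}$ satisfies the desired approximation and interpolation conditions up to level $n-1$. Define a function $G_n$, holomorphic on a neighbourhood of $L_n$, by $G_n \equiv 0$ on $\overline{D(0,R_{n-1})}$ and $G_n := \psi - f_{n-1}$ on $E_n$. Use Runge to obtain a polynomial $q_n$ with $|q_n - G_n| < \tau_n$ on $L_n$, where $\tau_n>0$ is to be pinned down momentarily, and note that Cauchy's estimate on a neighbourhood of $L_n$ where $G_n$ is holomorphic yields $|q_n'(z_k) - G_n'(z_k)| = O(\tau_n)$ for each $k \leq n$. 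Let $c_n$ be the Hermite polynomial of degree $\leq 2n+1$ determined by $c_n(z_k) = G_n(z_k) - q_n(z_k)$ and $c_n'(z_k) = G_n'(z_k) - q_n'(z_k)$, and set $p_n := q_n + c_n$. Then $p_n(z_k) = G_n(z_k) = 0$ and $p_n'(z_k) = 0$ for $k<n$, while $p_n(z_n) = \psi(z_n) - f_{n-1}(z_n)$ and $p_n'(z_n) = \psi'(z_n) - f_{n-1}'(z_n)$, so that $f_n = f_{n-1} + p_n$ inherits all required interpolation identities. Since the Hermite data are each $O(\tau_n)$, the standard bound on the Hermite interpolant gives $\|c_n\|_{L_n} \leq C_n \tau_n$ with $C_n$ depending only on the geometry of $\{z_0,\ldots,z_n\}$; one then chooses $\tau_n$ small enough to force both $\|p_n\|_{\overline{D(0,R_{n-1})}} < 2^{-n}$ and $|p_n - G_n| < \varepsilon_n/2$ on $E_n$.

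\emph{Convergence and obstacle.} For every compact $K \subset \C$, eventually $K \subset \overline{D(0,R_{n-1})}$, and the tail $\sum_{k\geq n}\|p_k\|_K$ is majorised by $\sum 2^{-k}$, so $f = \sum p_n$ is entire. On each $E_n$, $|f-\psi| \leq |f_n - \psi| + \sum_{k>n}\|p_k\|_{E_n} \leq \varepsilon_n/2 + \sum_{k>n}2^{-k}$, which can be tightened to $<\varepsilon_n$ by sharpening the tolerances $2^{-k}$ from the outset. The interpolation conditions follow at once: $f(z_n) = f_n(z_n) + \sum_{k>n}p_k(z_n) = \psi(z_n)$ because each $p_k$ ($k>n$) vanishes at $z_n$, and similarly $f'(z_n) = \psi'(z_n)$. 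The main obstacle I anticipate is controlling the Lebesgue-type constant $C_n$ of the Hermite interpolation problem, which can grow rapidly with $n$; this is resolved by a diagonal choice of $\tau_n$ made after $C_n$ is known, since at each step only finitely many tolerances need to be dominated simultaneously.
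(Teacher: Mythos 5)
Your overall strategy is correct and is essentially the one used by Eremenko and Lyubich, whose Main Lemma this is (the present paper cites it from \cite{pathex} and does not give its own proof): build $f=\sum_n p_n$ inductively, with each $p_n$ obtained from classical Runge on a compact set with connected complement plus a low-degree Hermite correction at $z_0,\ldots,z_n$, choosing the Runge tolerance $\tau_n$ only after the Hermite constant $C_n$ is known. The Alexander-duality remark (a union of pairwise disjoint compacta, each with connected complement, again has connected complement) is the right topological input, and your bookkeeping for the Hermite correction — double zeros at $z_k$ for $k<n$, exact value and derivative matching at $z_n$, so that $f(z_n)=\psi(z_n)$ and $f'(z_n)=\psi'(z_n)$ survive the tail — is carried out correctly.

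The gap is in the Setup. You assert that, after a ``harmless reindexing,'' one can choose radii $R_n\to\infty$ with $K_n\subset D(0,R_n)$ and $E_k\cap\overline{D(0,R_n)}=\emptyset$ for $k>n$. Hypotheses (i)--(iii) do not imply this: (iii) controls $\min\{|z|:z\in E_n\}$ but says nothing about $\max\{|z|:z\in E_n\}$, so the sets may overlap radially without bound. Concretely, take $E_n=\{\,2^n+iy:0\le y\le 2^{n+1}\,\}$, a vertical segment at real part $2^n$; then (i)--(iii) hold, but $\max\{|z|:z\in E_n\}=2^n\sqrt{5}>2^{n+1}=\min\{|z|:z\in E_{n+1}\}$ for every $n$, so no circle $\{|z|=R\}$ ever separates one $E_n$ (or any finite union of them) from all the later ones, under any reindexing or regrouping. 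Without the $R_n$, the set $L_n=\overline{D(0,R_{n-1})}\cup E_n$ on which you apply Runge is undefined, and — more fundamentally — the tail bound $\|p_k\|_{\overline{D(0,R_{n-1})}}<2^{-k}$, which is precisely what makes $\sum p_n$ converge locally uniformly on all of $\C$ and hence $f$ entire, cannot be arranged. (Controlling $p_n$ only on $K_{n-1}$ gives convergence on $\bigcup_n E_n$, not on $\C$.) The repair is to exploit the local finiteness that (iii) does give — any compact set meets only finitely many $E_n$ — and to build, at each stage, a compact ``filler'' set with connected complement that contains $K_{n-1}$ together with a large disk but avoids all later $E_m$, via a polynomial-hull argument and a reindexing based on that hull rather than on a separating circle; with such a filler in place of $\overline{D(0,R_{n-1})}$, the remainder of your inductive scheme goes through unchanged.
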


\begin{rem}\label{rem2}
We note that if the sets $E_n$ are each real-symmetric (that is, $\overline{E_n}=E_n$), the function~$\psi$ is real-symmetric in~$E$ (that is, $\overline{\psi(\overline{z})}=\psi (z)$, for $z \in E$), and the points $z_{n}, n\geq 0$, are all real, then we can take the entire function~$f$ to be real-symmetric on $\C$. Indeed, if~$f$ satisfies the conclusions of Lemma \ref{Runge E-L}, then $g(z)=\frac{1}{2}(f(z)+\overline{f(\overline{z})})$ is real-symmetric and entire, and satisfies the conclusions of Lemma \ref{Runge E-L}.
\end{rem}

We also need the following result, which is a version of \cite[Lemma 2]{pathex}.

\begin{lem}
\label{lem:EL2}
Let $g$ be an analytic function in the disc $\{z:|z|<R\}$ such that $g(0)=g'(0)=0$ and $|g(z)|<\epsilon R$ for $|z|<R$ and some $\epsilon< 1/4$. Then
$$|g(z)| \leq \frac{\epsilon}{R}|z|^2,\;\text{ for }|z| <R.$$
\end{lem}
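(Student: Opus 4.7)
The plan is to prove this as a direct application of the Schwarz lemma (maximum modulus principle) to an auxiliary function obtained by factoring out the double zero of $g$ at the origin.

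Since $g$ is analytic on $\{|z|<R\}$ with $g(0)=g'(0)=0$, I would first set $h(z) = g(z)/z^2$ for $z \neq 0$ and extend $h$ analytically through the origin via its power series (since the Taylor expansion of $g$ at $0$ begins with a $z^2$ term). Thus $h$ is holomorphic on all of $\{|z|<R\}$, and proving the lemma reduces to showing $|h(z)| \leq \epsilon/R$ on that disc.

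Next, I would bound $h$ on circles close to the boundary. Fix any $r$ with $0 < r < R$. For $|z| = r$, using the hypothesis $|g(z)| < \epsilon R$,
\[
|h(z)| = \frac{|g(z)|}{|z|^2} < \frac{\epsilon R}{r^2}.
\]
Since $h$ is holomorphic on $\{|z| \leq r\}$, the maximum modulus principle gives $|h(z)| \leq \epsilon R/r^2$ for all $|z| \leq r$. Letting $r \to R^-$ (with the left-hand side fixed at any chosen $z$ in the open disc), we obtain $|h(z)| \leq \epsilon/R$, which rearranges to the claimed inequality $|g(z)| \leq (\epsilon/R)|z|^2$.

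There is essentially no obstacle here: the result is a standard Schwarz-type estimate, and the hypothesis $\epsilon < 1/4$ is not actually used in this step (it is a convenience assumption inherited from the original formulation in \cite{pathex} that is presumably needed where the lemma is applied elsewhere). If one wished to honour that hypothesis in the proof, one could note at the end that $\epsilon/R < 1/(4R)$ giving the slightly sharper form $|g(z)| < |z|^2/(4R)$, but this is cosmetic.
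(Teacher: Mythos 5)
Your proof is correct and is the standard Schwarz-type argument: factor out the double zero to form the holomorphic function $h(z)=g(z)/z^2$, bound $|h|$ on circles $|z|=r$ via the hypothesis, apply the maximum modulus principle, and let $r\to R^-$. The paper itself gives no proof of this lemma (it simply cites it as a version of Lemma~2 of Eremenko--Lyubich), and the argument in that source is the same one you give; your observation that the hypothesis $\epsilon<1/4$ is not used in the proof and is only relevant where the lemma is later applied is also accurate.
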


Finally, we need the following lemma about hyperbolic distances in discs, which is \cite[Lemma 5.2]{BEFRS}.
\begin{lem}
\label{lem:hyp estimate}
Suppose that $0<s<r<1<R$ and set
\[
c(s,R)= \frac{1-s^2}{R-s^2/R},\quad D_r=D(0,r) \quad\text{and}\quad D_R=D(0,R).
\]
If $|z|,|w|\leq s$, then
\begin{equation}
\label{hyp est 1}
\operatorname{dist}_{D_R}(z,w)= \operatorname{dist}_{\mathbb{D}}({z}/{R},{w}/{R})\geq c(s,R)\operatorname{dist}_{\mathbb{D}}(z,w),
\end{equation}
and
\begin{equation}
\label{hyp est 2}
\operatorname{dist}_{D_r}(z,w)= \operatorname{dist}_{\mathbb{D}}({z}/{r},{w}/{r})\leq \frac{1}{c(s/r,1/r)}\operatorname{dist}_{\mathbb{D}}(z,w).
\end{equation}
Also, $0<c(s,R)<1$ and if the variables~$s$,~$r$ and~$R$ satisfy in addition
\begin{equation}\label{srR}
1-r=o(1-s)\;\text{ as } s\to 1\quad\text{and}\quad R-1=O(1-r)\;\text{ as } r\to 1,
\end{equation}
then
\begin{equation}
\label{cR}
c(s,R)\to 1\;\text{ as}\;s \to 1,
\end{equation}
and
\begin{equation}\label{cr}
c\left(s/r,1/r \right) \to 1\;\text{ as}\;s \to 1.
\end{equation}
\end{lem}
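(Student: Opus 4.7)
The plan is to reduce both estimates to a single density-comparison argument on $\mathbb{D}$. The two stated equalities in (\ref{hyp est 1}) and (\ref{hyp est 2}) are immediate from conformal invariance of the hyperbolic metric under the biholomorphisms $\zeta\mapsto\zeta/R\colon D_R\to\mathbb{D}$ and $\zeta\mapsto\zeta/r\colon D_r\to\mathbb{D}$, so the real content is the ratio $c(s,R)$ in the first inequality; the second will then follow by a change of scale.

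For the first inequality, I would work with the Poincar\'e densities $\lambda_{\mathbb{D}}(\zeta)=1/(1-|\zeta|^2)$ and $\lambda_{D_R}(\zeta)=R/(R^2-|\zeta|^2)$, and compute
\[
\frac{\lambda_{D_R}(\zeta)}{\lambda_{\mathbb{D}}(\zeta)}=\frac{R(1-|\zeta|^2)}{R^2-|\zeta|^2}.
\]
A short calculation (differentiate in $|\zeta|^2$) shows this ratio is strictly decreasing in $|\zeta|\in[0,1)$ when $R>1$, so on $\{|\zeta|\le s\}$ it is bounded below by its value at $|\zeta|=s$, which is precisely $c(s,R)$. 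The key geometric input is the observation that $\{|\zeta|\le s\}$ coincides with a closed hyperbolic ball of $D_R$ centered at $0$, and is therefore hyperbolically convex in $D_R$. Consequently, for $|z|,|w|\le s$, the hyperbolic geodesic $\gamma$ in $D_R$ from $z$ to $w$ stays inside $\{|\zeta|\le s\}\subset\mathbb{D}$. Integrating the density bound along $\gamma$ then yields
\[
\operatorname{dist}_{D_R}(z,w)=\int_{\gamma}\lambda_{D_R}\,|d\zeta|\ge c(s,R)\int_{\gamma}\lambda_{\mathbb{D}}\,|d\zeta|\ge c(s,R)\operatorname{dist}_{\mathbb{D}}(z,w),
\]
the final step because $\gamma$ is a competitor path in $\mathbb{D}$ from $z$ to $w$.

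Inequality (\ref{hyp est 2}) then follows by applying the first inequality to the pair $(\mathbb{D},D(0,1/r))$ with $s/r$ in place of $s$: for $a=z/r$, $b=w/r$ the hypothesis gives $|a|,|b|\le s/r<1<1/r$, so
\[
\operatorname{dist}_{D(0,1/r)}(a,b)\ge c(s/r,1/r)\operatorname{dist}_{\mathbb{D}}(a,b).
\]
Rewriting the left side as $\operatorname{dist}_{\mathbb{D}}(ra,rb)=\operatorname{dist}_{\mathbb{D}}(z,w)$ (via $\zeta\mapsto r\zeta$) and the right side as $c(s/r,1/r)\operatorname{dist}_{D_r}(z,w)$ (via $\zeta\mapsto\zeta/r$) delivers (\ref{hyp est 2}).

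Finally, the algebraic identity
\[
1-c(s,R)=\frac{(R-1)(R+s^2)}{(R-s)(R+s)}
\]
makes both $0<c(s,R)<1$ transparent. Under (\ref{srR}), the hypotheses force $R-1=o(1-s)$, hence $R-s=(R-1)+(1-s)\sim(1-s)$ and $1-c(s,R)\sim(R-1)/(1-s)\to 0$, which is (\ref{cR}). For (\ref{cr}), a direct simplification yields $c(s/r,1/r)=(r^2-s^2)/(r(1-s^2))$, and writing $r-s=(1-s)-(1-r)$ together with $1-r=o(1-s)$ shows $(r-s)/(1-s)\to 1$, whence $c(s/r,1/r)\to 1$. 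The main obstacle I anticipate is the geometric step---namely, identifying the Euclidean sub-disc $\{|\zeta|\le s\}$ as a hyperbolic ball of $D_R$ centered at $0$ and invoking its hyperbolic convexity---since without this the density bound, which holds only for $|\zeta|\le s$, could not be applied along the full geodesic.
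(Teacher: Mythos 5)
Your argument is sound and complete. Note that this paper does not itself prove the lemma---it is cited directly as Lemma~5.2 of \cite{BEFRS}---so there is no in-paper proof to compare against. The density-comparison you give is the standard route: the ratio $\lambda_{D_R}/\lambda_{\mathbb{D}}$ is decreasing in $|\zeta|$ for $R>1$, so it is bounded below by $c(s,R)$ on $\{|\zeta|\le s\}$; and your geodesic-convexity justification---that $\{|\zeta|\le s\}$ is a hyperbolic ball of $D_R$ centred at $0$, so the $D_R$-geodesic from $z$ to $w$ stays inside it---is exactly what is needed to integrate the density bound along the full geodesic. The reduction of \eqref{hyp est 2} to \eqref{hyp est 1} via $(s,R)\mapsto(s/r,1/r)$ is clean, and the algebraic identities $1-c(s,R)=(R-1)(R+s^2)/\bigl((R-s)(R+s)\bigr)$ and $c(s/r,1/r)=(r^2-s^2)/\bigl(r(1-s^2)\bigr)$, together with the asymptotics under \eqref{srR}, all check out.
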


\section{Proof of Theorem \ref{thm:main construction}}

In this section, we prove our construction result. We consider the sets $V_m = \phi^m(\Delta_0)$, where $\Delta_0 = D(4,1)$, as defined in the statement of Theorem~\ref{thm:main construction} and construct a function~$f$ which is sufficiently close to $\phi$ in parts of the plane in order to ensure that $f$ has a {\red bounded} wandering domain~$U$ with $f^m(U)$  close to $V_m$, {\red for $m\ge 0$}, in the sense that the Hausdorff distance between $U_m$ and $V_m$ tends to $0$ as $m \to \infty$.

{\red\subsection*{The sets $V_m$}}

We begin by noting that it follows from the definition of $\phi$ and the fact that $\alpha_{m+1}/\alpha_m \leq 1/6$, for $m \geq 0$, that, for each $n \geq 0$,
\[
\phi(\Delta_n)= G_n,
\]
\[
\phi^2(\Delta_n) = D(4\alpha_{n+1}, \alpha_{n+1}) \subset D(0,\alpha_0) = D_0,
\]
so, for $0 \leq k \leq n$,
\[
\phi^{k+2}(\Delta_n) = D(9k + 4\alpha_{n+1}, \alpha_{n+1}) \subset D(9k, \alpha_k) = D_k,
\]
and
\[
\phi^{n+3}(\Delta_n) = D(9(n+1) + 4\alpha_{n+1}, \alpha_{n+1}) = \Delta_{n+1}.
\]
{\red We obtain the following properties of $V_m$, as stated in Theorem~\ref{thm:main construction}, and illustrated in Figure 1:
\begin{equation}\label{V}
V_m = D(\zeta_m,\rho_m)=\phi^m(\Delta_0) =
\begin{cases}
\Delta_n, \;\; \mbox{ if } m=\ell_n -1, \; n \geq 0,\\
G_n,\;\; \mbox{ if } m=\ell_n, \; n \geq 0,\\
D(9k + 4\alpha_{n+1},\alpha_{n+1}) \subset D_k, \;\; \mbox{ if } m = \ell_n + k+1, \; 0 \leq k \leq n,
\end{cases}
\end{equation}
where $(\ell_n)$ is defined by $\ell_0 = 1 \mbox{ and } \ell_{n+1} = \ell_{n} + n+3,$ for $n\ge 0$.}

{\red In words}, if $V_m \subset D_0$, then $\phi$ repeatedly translates $V_m$ to the right by 9 until the translated image lands on $\Delta_{n}$, for some $n \in \N$, at which point $\phi$ maps the disc $\Delta_{n}$ onto $G_n$ and then maps $G_n$ into $D_0$ (see Figure 1).

\subsection*{Construction of the circles $\gamma_n$ and $\Gamma_n$}

We now give an inductive definition of the values $r_m$ and $R_m$ described in Theorem~\ref{thm:main construction}, part~(i), and define $\alpha_n$ inductively at the same time. We will choose these values in such a way that, if we define
\[
\gamma_m = \{z: |z-\zeta_m| = r_m\} \quad\mbox{and}\quad \Gamma_m = \{z: |z-\zeta_m| = R_m\},
\]
then, for $m \geq 0$,
\begin{equation}\label{propa}
\gamma_{m+1}\;\text{ surrounds}\;\phi(\gamma_m),
\end{equation}
and
\begin{equation}\label{propb}
\phi(\Gamma_m)\;\text{ surrounds}\; \Gamma_{m+1}.
\end{equation}
Further, we choose these values in such a way that we are able to use Lemma~\ref{Runge E-L} and Lemma~\ref{lem:EL2} to approximate the map $\phi$ by an entire function $f$ such that $\phi$ can be replaced by $f$ in~\eqref{propa} and~\eqref{propb}. This in turn allows us to apply Theorem~\ref{WDexist} to deduce that $f$ has wandering domains with the required properties.

Our construction uses the Blaschke products $b_n$ which, for $n \geq 0$, we write as
\[
b_n(z) = e^{i \theta_n} \prod_{j=1}^{d_n}\frac{z+p_{n,j}}{1+\overline{p_{n,j}}z},
\]
where $p_{n,j} \in \mathbb{D}=\{z:|z|<1\}$ are not necessarily different from each other, and $\theta_n \in [0, 2\pi)$. We also use the maps defined by
\begin{equation}\label{Bdef}
B_n(z) = b_n(z-\kappa_n), \mbox{ for } n\geq 0,
\end{equation}
{\red noting that $B_n$ has degree~$d_n$ and maps $G_n$ to $D_0=\D$.}

First take
\begin{equation}\label{r0}
{\red r_0 \in (5/6,1) \mbox{ and } R_0 \in (1,7/6),}
\end{equation}
and {\red recall that} $\alpha_0 = 1$. We then choose $r_1$ such that
\begin{equation}\label{r1}
0<1- r_1 \leq \min\left\{\frac{1- r_0}{2}, \operatorname{dist}(\phi(\gamma_0), \partial G_0)^2\right\}
\end{equation}
\[
B_0(\gamma_1)\; \text{winds exactly}\;d_0\;\text{times round}\;D(0,1/2),
\]
so
\[
\phi(\gamma_1)\; \text{winds exactly}\;d_0\;\text{times round}\;D(\zeta_2,\rho_2/2),
\]
and choose $R_1$ such that
\begin{equation}
\label{R1}
0< R_1 - 1 \leq \min \left\{\frac{R_0-1}{2}, {\red \operatorname{dist}(\phi(\Gamma_0), \partial G_0)}, \frac{1}{\max_j{|p_0,j}|-1}\right\}.
\end{equation}

Now assume that, for some $n \geq 0$, $\alpha_{k}$ has been chosen for $0 \leq k \leq n$, and $r_m$ and $R_m$ have been chosen for $0\leq m\leq \ell_{n}$. (Note that $\ell_0 = 1$ and we have already specified $\alpha_0$, $r_0$, $R_0$, $r_1$ and $R_1$.) We shall give a rule for choosing $\alpha_{n+1}$ and also for choosing $r_m$ and $R_m$ for $\ell_{n} + 1 \leq m \leq \ell_{n+1}$. There are three different cases depending on the value of $m$.\\

{\bf Case 1} \;First we consider the case when $m = \ell_{n} + 1$ (and so $V_m \subset D_0$). We also specify $\alpha_{n+1}$ as part of this case.

We begin by choosing
$c_{n+1}$, $C_{n+1}$ to be circles centred at 0, lying in the interior and exterior of $D_0$ respectively, such that
\begin{equation}\label{eta}
\operatorname{dist}(c_{n+1}, \partial D_0) \leq \min\left\{ \frac{\rho_{\ell_{n}}-r_{\ell_{n}}}{6}, \frac{1}{2} \operatorname{dist}(B_{n}(\gamma_{\ell_{n}}), \partial D_0)\right\}
\end{equation}
and
\begin{equation}\label{H}
\operatorname{dist}(C_{n+1}, \partial D_0) \leq \min \left\{\frac{R_{\ell_{n}}-\rho_{\ell_{n}}}{6}, \operatorname{dist}(c_{n+1}, \partial D_0), \frac{1}{2} \operatorname{dist}(B_{n}(\Gamma_{\ell_{n}}), \partial D_0)\right\}.
\end{equation}
We set
\begin{equation}
\label{eq:errors Lem2}
\alpha_{n+1} = \operatorname{dist}(C_{n+1},\partial D_0)
\end{equation}
and note, {\red using the fact that $\phi(z)=\alpha_{n+1}B_n(z)+4\alpha_{n+1}$, for $z \in G'_n$,} that $V_{\ell_n+1}=\phi(V_{\ell_n})=D(4\alpha_{n+1},\alpha_{n+1})$, so $\rho_{\ell_n+1} = \alpha_{n+1}$. We then set
\begin{equation}\label{r-2a}
 r_{\ell_{n}+1}= \rho_{\ell_{n}+1}- \alpha^2_{n+1}
\end{equation}
 and
\begin{equation}\label{R-2a}
 R_{\ell_{n}+1}= \rho_{\ell_{n}+1}+ \alpha^2_{n+1}.
\end{equation}
Note that, together with~\eqref{eta}, \eqref{eq:errors Lem2} and \eqref{H}, these definitions imply that

\begin{equation}\label{r-2}
 \rho_{\ell_{n}+1}- r_{\ell_{n}+1}  = \alpha^2_{n+1}\leq \alpha_{n+1} \operatorname{dist}(c_{n+1}, \partial D_0) \leq \frac{1}{2}\operatorname{dist}(\phi(\gamma_{\ell_{n}}), \partial V_{\ell_n +1}),
\end{equation}
\begin{equation}\label{R-2}
 R_{\ell_{n}+1} - \rho_{\ell_{n}+1}  = \alpha^2_{n+1} =  \alpha_{n+1} \operatorname{dist}(C_{n+1}, \partial D_0) \leq  \frac{1}{2}\operatorname{dist}(\phi(\Gamma_{\ell_{n}}), \partial V_{\ell_n +1}),
 \end{equation}
{\red and hence
\begin{equation}\label{R-r}
 R_{\ell_{n}+1} - r_{\ell_{n}+1} =2\alpha_{n+1}^2 \le \min\{\operatorname{dist}(\phi(\gamma_{\ell_{n}}), \partial V_{\ell_n +1}),\operatorname{dist}(\phi(\Gamma_{\ell_{n}}), \partial V_{\ell_n +1})\}.
 \end{equation}}

{\bf Case 2}\; We now consider the cases when $ m = \ell_{n} + k + 1$, for $1 \leq k \leq n+1$. Then
{\red
\[
V_{\ell_{n} + k + 1} = \phi^k(V_{\ell_n+1}) \subset D_{k}, \text{ for } 1\le k\le n,
\]
and
\[
V_{\ell_{n}+n+2} = V_{\ell_{n+1}-1} = D(9(n+1)+4\alpha_{n+1},\alpha_{n+1})=\Delta_{n+1}.
\]}
In all these cases, we simply choose $r_m$ and $R_m$ to satisfy
\begin{equation}\label{r-3}
 \rho_m-r_{m}= \frac{\rho_{m-1}-r_{m-1}}{2},
\end{equation}
and
\begin{equation}\label{R-3}
R_m-\rho_m= \frac{R_{m-1}-\rho_{m-1}}{2}.
 \end{equation}

{\bf Case 3} \;Finally, we consider the case when $m = \ell_{n+1} = \ell_n + n+3$, so $V_m = G_{n+1}$. In this case, we choose $r_{\ell_{n+1}}$ and $R_{\ell_{n+1}}$ so that

\begin{equation}
\label{r-4}
0< \rho_{\ell_{n+1}}-r_{\ell_{n+1}} \leq \min\left\{ \frac{\rho_{{\ell_{n+1}}-1}-r_{{\ell_{n+1}}-1}}{2}, \operatorname{dist}(\phi(\gamma_{\ell_{n+1}-1},\partial G_{n+1})^2\right\};
\end{equation}
\begin{equation}
\label{degree-4a}
{\red B_{n+1}(\gamma_{{\ell_{n+1}}})}\; \text{winds exactly}\;d_{n+1}\;\text{times round}\;D(0,1/2);
\end{equation}
\begin{equation}
\label{R-4}
0<R_{\ell_{n+1}} - \rho_{\ell_{n+1}} \leq \min \left\{\frac{R_{{\ell_{n+1}}-1}-\rho_{{\ell_{n+1}}-1}}{2},  \operatorname{dist}({\red \phi(\Gamma_{\ell_{n+1}-1}}, \partial G_{n+1}), \frac{1}{\max_j\{|p_{n+1,j}|\}}- 1\right\}.
\end{equation}

This inductive process defines the values $r_m$ and $R_m$, and hence the circles $\gamma_m$ and $\Gamma_m$, for $m \geq 0$. Note that it follows from \eqref{H},~\eqref{eq:errors Lem2}, \eqref{R-3}, {\red \eqref{R-2a}} and \eqref{R-4} that
\[
\alpha_{n+1} \leq \frac{R_{\ell_{n}} - \rho_{\ell_{n}}}{6} < \frac{R_{\ell_{n-1}+1} - \rho_{\ell_{n-1}+1}}{6} = \frac{\alpha^2_{n}}{6} <  \frac{\alpha_{n}}{6},\;\text{ for } n\ge 1.
\]

Moreover, it follows from the definition of $\phi$ together with \eqref{Bdef} and  \eqref{V} that we have

\begin{equation}\label{phidelta}
\phi(z) = \alpha_{n+1}B_n(z) + 4\alpha_{n+1} = \rho_{\ell_{n}+1}B_n(z) + {\red \zeta_{\ell_{n}+1}}, \mbox{ for } z \in G_n, n\geq 0.
\end{equation}
So~\eqref{degree-4a} implies that, for $n \geq 0$,

\begin{equation}
\label{degree-4}
\phi(\gamma_{{\ell_{n+1}}})\; \text{winds exactly}\;d_{n+1}\;\text{times round}\;D(\zeta_{\ell_{n+1}+1},\rho_{\ell_{n+1}+1}/2).
\end{equation}

We also note that it follows from~\eqref{R-2},~\eqref{R-3} and \eqref{V} that, for $m = \ell_{n}+k + 1$, where $0 \leq k \leq n+1$, we have {\red $R_m - \rho_m \le R_{\ell_n+1}- \rho_{\ell_n+1} < \alpha_{n+1} = \rho_m$.} So, for $m = \ell_{n}+k + 1$, where $0 \leq k \leq n$, we have
\begin{equation}\label{V'}
V_m' = D(\zeta_m,R_m) \subset  D(\zeta_m,2\rho_m) =  D(9k + 4\alpha_{n+1},2 \alpha_{n+1})  \subset D(9k, 6\alpha_{n+1}) \subset D_{k},
\end{equation}
and
{\red
\begin{equation}\label{Delta'}
V'_{\ell_{n}+n+2} = V'_{\ell_{n+1}-1} =  D(\zeta_{\ell_{n+1}-1}, R_{\ell_{n+1}-1}) \subset \Delta_{n+1}'.
\end{equation}
by the definitions of $D(\zeta_m,\rho_m)$ and $\Delta_{n}'$ in the statement of Theorem~\ref{thm:main construction}.}

It then follows from~\eqref{R-4} and~\eqref{V} that $R_{\ell_{n+1}} - \rho_{\ell_{n+1}} < \alpha_{n+1}$ and so
\begin{equation}\label{G'}
V'_{\ell_{n+1}} = D(\zeta_{\ell_{n+1}}, R_{\ell_{n+1}}) \subset G_{n+1}'.
\end{equation}
Also, $\phi$ is analytic in $V'_{\ell_{n+1}}$ by~\eqref{phidelta} together with the last condition in~\eqref{R-4}.

It follows from~\eqref{r-2} that, for $n \geq 0$,
\begin{equation}
\gamma_{\ell_n + 1} \mbox{ surrounds } \phi(\gamma_{\ell_n}),
\end{equation}
and from~\eqref{R-2} that, for $n \geq 0$,
\begin{equation}
\phi(\Gamma_{\ell_n}) \mbox{ surrounds } \Gamma_{\ell_n + 1}.
\end{equation}
Thus \eqref{propa} and \eqref{propb} hold when $m = \ell_n$, where $n \geq 0$.

Also, if $m = \ell_n+k+1$, where $n \geq 0$, $0 \leq k \leq n+1$, then $\phi$ is a translation on $\gamma_m$ and $\Gamma_m$, by~\eqref{V'} {\red and the definition of~$\phi$}. Since, by~\eqref{r-3}, we have
\[
\rho_{m+1}-r_{m+1} \leq \frac{\rho_{m}-r_{m}}{2}
\]
and, by~\eqref{R-3},
\[
R_{m+1}-\rho_{m+1} \leq \frac{R_m-\rho_{m}}{2},
\]
it follows that \eqref{propa} and \eqref{propb} hold for these values of~$m$ too.

Finally, it follows from~\eqref{Delta'} that, on $\gamma_{\ell_{n+1}-1}$ and $\Gamma_{\ell_{n+1}-1}$, {\red $n\ge 0$}, the function $\phi$ is a scaling by a factor of $1/\alpha_{n+1} > 1$ followed by a translation, and so it follows from~\eqref{r-4} and~\eqref{R-4} that \eqref{propa} and \eqref{propb} hold in this case too.

We note that the sets $\overline{V_m'}$ are disjoint since, if $V'_{\ell_n+k+1}, V'_{\ell_{n+1}+k+1} \subset D_k$, for some $n \geq 0$, {\red $0\le k\leq n$}, then $V'_{\ell_n+k+1} \subset D(9k + 4\alpha_{n+1}, 2\alpha_{n+1})$ and $V'_{\ell_{n+1}+k+1} \subset D(9k + 4\alpha_{n+2}, 2\alpha_{n+2})$, and
\[
D(9k + 4\alpha_{n+1}, 2\alpha_{n+1}) \cap D(9k + 4\alpha_{n+2}, 2\alpha_{n+2}) = \emptyset,
\]
since
\[4\alpha_{n+2} + 2\alpha_{n+2} = 6\alpha_{n+2} \leq \alpha_{n+1} < 4\alpha_{n+1} - 2\alpha_{n+1}.
\]


\subsection*{Construction of the function $f$}

Our aim now is to use Lemma \ref{Runge E-L} and Lemma~\ref{lem:EL2} to approximate the map $\phi$ by a single entire function~$f$ such that, for $m \geq 0$, $\gamma_{m+1}$ surrounds $f(\gamma_m)$ and $f(\Gamma_m)$ surrounds $\Gamma_{m+1}$. We also require~$f$ to map certain curves $L_n$ near $\overline{G_n}$ in such a way that we can apply Theorem~\ref{WDexist}.

We define $L_n$, for $n\geq 0$, to be the circular arc
 \begin{equation}\label{eq:reefs}
 L_n:=\{z:|z-a_n|=R_{\ell_n}+\delta_{\ell_n}^2/2,\;|\operatorname{arg}(z-a_n)| \leq \pi - \delta_{\ell_n}^2\},
 \end{equation}
where $\delta_m= R_m-r_m \to 0$ as $m \to \infty$; see Figure 2.
\begin{figure}[hbt!]
\centering
\begin{tikzpicture}[scale=2.2] 
\draw[blue] (2.2,0) circle [radius=0.11]; 

\draw  (-1.5,0) circle [radius=0.5]; 
\draw[blue]  (-1.5,0) circle [radius=0.7]; 
\draw[fill=lime] (-4.7,0) circle [radius=0.6];
\draw[fill=lime] (-3.1,0) circle [radius=0.6];
\draw[fill=lime] (-1.5,0) circle [radius=0.6];
\draw[fill=lime] (0.4,0) circle [radius=0.17];
\draw[gray](-4.3,0) circle [radius=0.15];
\draw[gray](-4.6,0) circle [radius=0.06];
\draw[fill=lime] (0.9,0) circle [radius=0.17];
\draw[fill=lime] (2,0) circle [radius=0.6];
\draw [->] (-1.9,0.2) to [out=130,in=50] (-4.3,0.2);
\draw [->] (1.8,0.2)to [out=140,in=50] (-4.6,0.08);
\draw [->] (-2.7,-0.15) to [out=310,in=220] (-1.9,-0.15);
\draw [->] (0.9,-0.15) to [out=310,in=220] (1.6,-0.15);
\draw[fill] (-3.1,0) circle [radius=0.01];
\draw[fill] (-0.3,0) circle [radius=0.01];
\draw[fill] (-0.2,0) circle [radius=0.01];
\draw[fill] (-4.7,0) circle [radius=0.01];
\draw[fill] (-0.1,0) circle [radius=0.01];
\draw[fill] (0.9,0) circle [radius=0.01];
\draw[fill] (2,0) circle [radius=0.01];
\draw[fill] (-1.5,0) circle [radius=0.01];
\draw[blue]  (-1.5,0) circle [radius=0.3]; 
\draw[blue]  (-3.1,0) circle [radius=0.72];
\draw[blue]  (-3.1,0) circle [radius=0.27];
\draw[blue] (0.9,0) circle [radius=0.22]; 
\draw[blue] (0.9,0) circle [radius=0.12];
\draw[blue] (2,0) circle [radius=0.65]; 
\draw[blue] (2,0) circle [radius=0.55];
\draw[red] (-0.7,0) arc (0:172:0.8);
\draw[red] (-1.5+0.8*cos{188},0.8*sin{188}) arc(188:360:0.8);
\draw[red] (2.7,0) arc (0:176:0.7);
\draw[red] (2+0.7*cos{188},0.7*sin{188}) arc(188:360:0.7);
\node at (-3.1,-0.1) {$4$};
\node at (-1.5,-0.1) {$7$};
\node at (-4.7,-0.1) {$0$};
\node at (2,-0.06) {$\kappa_n=a_n+3$};
\node at (0.9,-0.05) {$a_n$};
\node at (1.2,-0.45) {$\phi$};
\node at (-4.4,-0.75) {$D_0$};
\node at (-3.1,0.4) {$\Delta_0=V_0$};
\node at (-1.4,0.9) {$L_0$};
\node at (2.1,0.3) {$G_n=V_{\ell_n}$};
\node at (2.1,0.8) {$L_n$};
\node at (-1.5,0.4) {$G_0=V_1$};
\node at (0.4,-0.3) {$D_n$};
\node at (0.9,0.3) {$\Delta_n$};
\node at (-2.3,-0.5) {$\phi$};
\node at (-3.1,0.9) {$\phi$};
\node at (-1.3,1.6) {$\phi$};
\end{tikzpicture}
\caption{Sketch of the setup of Theorem \ref{thm:main construction}, {\red showing the location of the circles $\gamma_n$ and $\Gamma_n$ (in blue), and the arcs~$L_n$ (in red).}}
\end{figure}
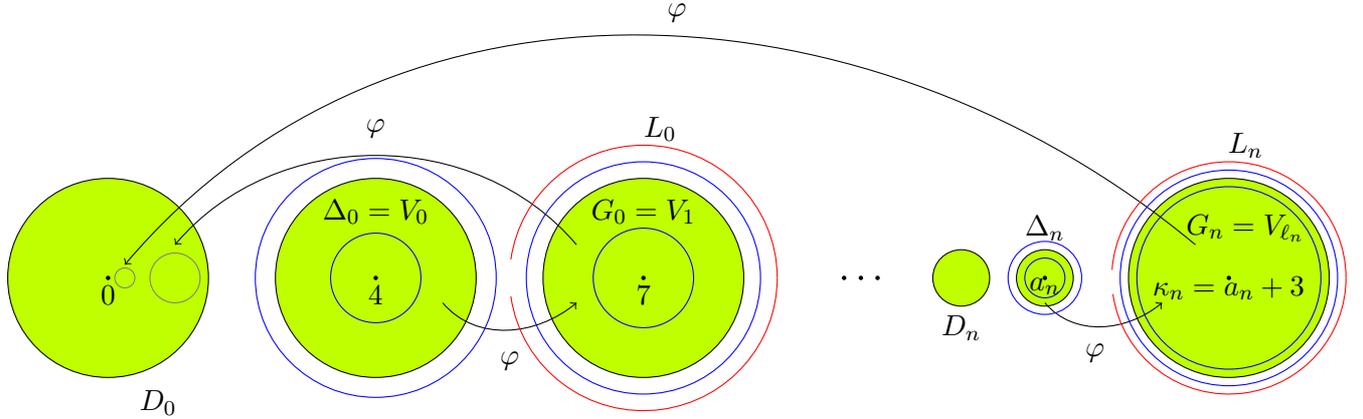

 We also define approximation error quantities $\varepsilon_m$, for $m \geq 0$, by
\begin{equation}\label{eq:error}
\varepsilon_m = \min \{\tfrac{1}{4}\operatorname{dist}(\phi(\gamma_{m}), \partial V_{m+1}), \tfrac{1}{4} \operatorname{dist}(\phi(\Gamma_{m}), \partial V_{m+1}), \tfrac14\delta_{m+1}\}>0.
\end{equation}


We now show that these errors have the upper bounds stated in part~(ii) of our theorem.
First, it follows from \eqref{r0}, \eqref{r1} and \eqref{R1} that
\[
{\red \varepsilon_0 \leq  (R_1 - r_1)/4 \leq (R_0 - r_0)/8 < 1/24.}
\]

{\red Next we note that it follows from~\eqref{R-r} that, for $n \geq 0$,
\[
\varepsilon_{\ell_n} = \delta_{\ell_n+1}/4=(R_{\ell_n +1} - r_{\ell_n +1})/4 = 2\alpha_{n+1}^2/4 = \alpha_{n+1}^2/2.
\]

It then follows from (\ref{r-3}), (\ref{R-3}), (\ref{r-4}) and (\ref{R-4}) that, for $0\leq k \leq n+1, \; n\geq 0$,
\begin{equation}\label{eq:error-translation}
\varepsilon_{\ell_n+k+1} = (R_{\ell_n +k + 2} - r_{\ell_n + k +2})/4  = (R_{\ell_n +1} - r_{\ell_n +1})/2^{k+3} = \alpha_{n+1}^2/2^{k+2}.
\end{equation}
Thus
\begin{equation}\label{eps}
\varepsilon_{\ell_n+k} = \alpha_{n+1}^2/2^{k+1},\; \mbox{ for } 0\leq k \leq n+2, \; n\geq 0,
\end{equation}
as required for part~(ii).}

Since $\phi$ is analytic in each set $\overline{V'_{\ell_{n+1}}}$, for $n \geq 0$, it follows from Lemma \ref{Runge E-L} that there exists an entire function~$f$ such that, for  $n \geq 0$,
\begin{equation}\label{approx1}
|f(z)-\phi(z)| < \varepsilon_{\ell_n + n+1}, \text{\ \  for }  z \in \overline{D_n},
\end{equation}
\begin{equation}
\label{approxnew}
|f(z)-\phi(z)| < \varepsilon_{\ell_n-1}, \text{\ \  for }  z \in \overline{\Delta_n'},
\end{equation}
\begin{equation}\label{approx2}
|f(z)-\phi(z)| < \varepsilon_{\ell_n}, \text{\ \  for }  z \in \overline{G_n'},
\end{equation}
\begin{equation} \label{approx5}
f(9n)=9({n+1}),
\end{equation}
\begin{equation}\label{approx6}
f'(9n)= 1,
\end{equation}
and such that
\begin{equation}\label{approx3}
|f(z)+4| \leq 1/2,  \text{\ \  for }  z\in \overline{D(-4,1)}  \cup \bigcup_{n\geq 0}  L_n.
\end{equation}

It follows from \eqref{approx1}, {\red \eqref{approx5} and \eqref{approx6}} that for each $k \geq 0$ we can apply Lemma \ref{lem:EL2} in the disc $D_k = D(9k, \alpha_k)$,  with $g(z)= f(z)-\phi (z)$, $R=\alpha_k$ and associated constant $\epsilon = \varepsilon_{\ell_k + k+1}/\alpha_{k}$. Note that the conditions of Lemma~\ref{lem:EL2} are satisfied since it follows from~\eqref{eps} that
\[
\epsilon = \varepsilon_{\ell_k + k+1}/\alpha_{k}  = \frac{\alpha_{k+1}^2}{\alpha_{k}2^{k+2}} < \frac{\alpha_{k+1}}{6}< 1/4, \mbox{ for } k\geq 0.
\]
So, by Lemma~ \ref{lem:EL2}, for all $z \in D_k$, $k \geq 0$, we have
\begin{equation}\label{fDk}
|f(z)-\phi(z)| \leq \frac{\varepsilon_{\ell_k + k+1}}{\alpha^2_{k}}|z-9k|^2.
\end{equation}

We will now show that this implies that, for each $m \geq 0$,
\begin{equation}
\label{eq:all errors}
|f(z)-\phi(z)|< \varepsilon_m,\;\text{ for } z \in \overline{V_m'}.
\end{equation}
First we note that, \eqref{eq:all errors} follows from~\eqref{Delta'} and~\eqref{G'} together with~\eqref{approxnew} and~\eqref{approx2} when $m = \ell_n$ or $m = \ell_n-1$, for some $n \geq 0$. {\red Other values of~$m$} are of the form $m = \ell_n + k + 1$, for some $n \geq 0$, $0 \leq k \leq n$,  and it follows from~\eqref{V'} that, in this case,
\[
V'_{m} \subset D(9k, 6\alpha_{n+1}) \subset D_{k}.\]

Therefore, by~\eqref{fDk}, \eqref{eps} and using the fact that $\alpha_{k+1} \leq \alpha_{k}/6$, we have, for $z \in V_{\ell_n + k+1}$, $n \geq 0$ and $0 \leq k \leq n$,
\begin{eqnarray*}
|f(z)-\phi(z)| & \leq & \frac{\varepsilon_{\ell_{k} + k +1}}{\alpha^2_{k}}(6\alpha_{n+1})^2\\
& = & \frac{\alpha_{k+1}^2}{2^{k+2} \alpha^2_{k}} 36 \alpha_{n+1}^2\\
& \leq &\frac{\alpha_{n+1}^2}{2^{k+2}} = \varepsilon_{\ell_{n} + k+1}.
\end{eqnarray*}

Thus~\eqref{eq:all errors} holds for all $m \geq 0$.

It now follows from (\ref{propa}), (\ref{propb}), (\ref{eq:error}) and (\ref{eq:all errors}) that, for $m \geq 0$,
\begin{equation}\label{propB}
\gamma_{m+1}\;\text{surrounds}\; f(\gamma_m);
\end{equation}
\begin{equation}\label{propA}
f(\Gamma_m)\;\text{surrounds}\;\Gamma_{m+1}.
\end{equation}

We now apply Theorem \ref{WDexist} to the Jordan curves $\gamma_m, \Gamma_m$, $m\geq 0$, the compact curves $L_n$, $n \geq 0$, and the bounded domain $D = D(-4, 1)$, noting that these sets satisfy the required hypotheses by construction and by~\eqref{propB}, \eqref{propA}, {\red \eqref{eq:reefs}} and \eqref{approx3}. Part~(i) of Theorem~\ref{thm:main construction} now follows from Theorem \ref{WDexist}, part~(ii) follows from~\eqref{eq:all errors} together with the upper bounds for the errors that we obtained earlier, and part~(iii) follows from (\ref{approx5}) and (\ref{approx6}).

Next we outline the proof of part~(iv). The fact that $f:U_{\ell_{n+1}} \to U_{\ell_{n+1}+1}$ has degree $d_{n+1}$ follows from the final statement of Theorem \ref{WDexist}, since (\ref{degree-4}), (\ref{eq:error}), and (\ref{approx2}) together imply that $f(\gamma_{\ell_{n+1}})$ and $f(\Gamma_{\ell_{n+1}})$ both wind exactly $d_{n+1}$ times round the disc $D(\zeta_{\ell_{n+1}+1}, \rho_{\ell_{n+1}+1}/2)$; for the details of this argument see the proof of \cite[Theorem 5.3]{BEFRS}. Since $\phi$ is univalent in all other cases, the same argument applies to show that $f:U_m \to U_{m+1}$ is univalent in all other cases.

{\red To complete the proof of Theorem~\ref{thm:main construction}, we note that} the double inequality that compares the hyperbolic distances in $U_{\ell_n}$ between points of two orbits under $f$ with the corresponding hyperbolic distances in the discs $G_n$ follows by applying Lemma \ref{lem:hyp estimate} with
\[
s=1-\tfrac{3}{4} \operatorname{dist}(\phi(\gamma_{\ell_n-1}), \partial G_n), \quad r=r_{\ell_n}\quad \text{and}\quad R=R_{\ell_n},
\]
and noting that $f^{\ell_{n+1} - \ell_n}(\overline{D(\kappa_n,r_{\ell_n})} \subset D(\kappa_{n+1},r_{\ell_{n+1}})$; we omit the details which are similar to those given in the proof of the final statement of \cite[Theorem 5.3]{BEFRS}.

\section{Preliminary results for Theorem \ref{realizable}}
In this section we prove some results which we use in order to construct our examples. In particular, we obtain estimates on the orbits of points in a wandering domain $U$ of a transcendental entire function $f$ obtained by applying Theorem~\ref{thm:main construction} with specific Blaschke products $b_n$. Our first result is used repeatedly in our constructions and gives estimates on the distances between orbits under the function $f$ and under the model function $\phi$.

\begin{lem}\label{orberr}
Let $f$ be a transcendental entire function with a wandering domain $U$ arising from applying Theorem~\ref{thm:main construction} with the Blaschke products $b_n$. Then, using the notation of Theorem~\ref{thm:main construction},

(a) if $z,z' \in U_{\ell_n}$, for some $n \geq 0$, we have
 \[
|f^{n+3}(z) - \phi^{n+3}(z')| \leq \alpha_{n+1} + |b_n(z - \kappa_n) - b_n(z'-\kappa_n)|;
\]

(b) and hence, if {\red $z, z' \in  \overline{D(\zeta_0,r_0)}\subset U_0$, we have}
\[
|f^{\ell_{n+1}}(z) - \phi^{\ell_{n+1}}(z')| \leq \alpha_{n+1} + |b_n(f^{\ell_n}(z) - \kappa_n) - b_n(\phi^{\ell_{n}}(z')-\kappa_n)|.
\]
\end{lem}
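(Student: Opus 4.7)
The plan for (a) is to track the propagated error $E_j := |f^j(z) - \phi^j(z')|$ for $0\le j\le n+3$ across the iterates that carry $U_{\ell_n}$ (close to $V_{\ell_n}=G_n$) to $U_{\ell_{n+1}}$ (close to $V_{\ell_{n+1}}=G_{n+1}$). I split the orbit into the three regimes corresponding to the three branches of the model $\phi$: the single Blaschke step from $G_n$ into $D_0$, then $n+1$ pure translations by $9$ through the discs $D_0,\dots,D_n$, and finally the single affine rescaling step from $\Delta_{n+1}$ into $G_{n+1}$. The approximation errors $|f-\phi|\le \varepsilon_m$ with $\varepsilon_{\ell_n+k}=\alpha_{n+1}^2/2^{k+1}$ come from Theorem~\ref{thm:main construction}(ii), and the inclusions $U_{\ell_n+j}\subset D(\zeta_{\ell_n+j},R_{\ell_n+j})$ from Theorem~\ref{thm:main construction}(i), combined with \eqref{V'}, \eqref{Delta'} and \eqref{G'}, guarantee that at each step both $f^j(z)$ and $\phi^j(z')$ lie in the domain of the appropriate branch of $\phi$.

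Write $\beta := |b_n(z-\kappa_n)-b_n(z'-\kappa_n)|$. For the Blaschke step the formula $\phi(w)=\alpha_{n+1}b_n(w-\kappa_n)+4\alpha_{n+1}$ on $G_n'$ gives $|\phi(z)-\phi(z')|=\alpha_{n+1}\beta$, hence $E_1\le \varepsilon_{\ell_n}+\alpha_{n+1}\beta$. For each of the translation steps $j=1,\dots,n+1$, both $f^j(z)\in U_{\ell_n+j}$ and $\phi^j(z')\in V_{\ell_n+j}$ lie in $D_{j-1}$ (by \eqref{V'}), where $\phi$ is the isometry $w\mapsto w+9$; therefore $E_{j+1}\le \varepsilon_{\ell_n+j}+E_j$. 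Telescoping yields $E_{n+2}\le \alpha_{n+1}\beta + \alpha_{n+1}^2(1-2^{-(n+2)})$. The final scaling step is the crux: on $\overline{\Delta'_{n+1}}$ the map $\phi$ is the affine scaling $w\mapsto (w-a_{n+1})/\alpha_{n+1}+\kappa_{n+1}$, so the accumulated error is amplified by $1/\alpha_{n+1}$. This $1/\alpha_{n+1}$ blow-up is exactly balanced by the $\alpha_{n+1}^2$ factor in every $\varepsilon_{\ell_n+j}$, giving
\[
E_{n+3}\le \varepsilon_{\ell_n+n+2}+E_{n+2}/\alpha_{n+1}\le \beta+\alpha_{n+1}\bigl[1-2^{-(n+2)}+\alpha_{n+1}2^{-(n+3)}\bigr]\le \alpha_{n+1}+\beta,
\]
where the last inequality uses $\alpha_{n+1}\le 1$. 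Keeping track of this cancellation is the main technical obstacle: without the explicit form $\varepsilon_{\ell_n+k}=\alpha_{n+1}^2/2^{k+1}$ supplied by Theorem~\ref{thm:main construction}(ii), the telescoped error would overwhelm the $1/\alpha_{n+1}$ factor at the last step and the bound would not close.

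For (b) I simply apply (a) with $z$ replaced by $w:=f^{\ell_n}(z)\in U_{\ell_n}$ and $z'$ replaced by $w':=\phi^{\ell_n}(z')$. Since $\ell_{n+1}=\ell_n+n+3$, this reproduces the left-hand side $|f^{\ell_{n+1}}(z)-\phi^{\ell_{n+1}}(z')|$ and the right-hand side $\alpha_{n+1}+|b_n(w-\kappa_n)-b_n(w'-\kappa_n)|$, which is exactly the claim. Strictly speaking, $w'\in\overline{G_n}$ need not lie in $U_{\ell_n}$, so one uses the proof of (a) under the mildly relaxed hypothesis $z'\in G_n'$; inspection of the argument above shows that only $z\in U_{\ell_n}$ (to invoke the approximation bound at the first step) and $z'\in G_n'$ (so that the Blaschke branch of $\phi$ is defined at $z'$) are ever needed, and both hold here because $z'\in\overline{D(\zeta_0,r_0)}\subset\overline{\Delta_0}$ gives $w'\in \phi^{\ell_n}(\overline{\Delta_0})\subset\overline{G_n}\subset G_n'$.
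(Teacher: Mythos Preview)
Your argument is correct and uses the same ingredients as the paper, but organises the bookkeeping differently. The paper first treats the special case $z=z'$, proving by induction that $|f^m(z)-\phi^m(z)|\le\sum_{k=0}^{m-1}\varepsilon_{\ell_n+k}$ for $1\le m\le n+2$ and hence $|f^{n+3}(z)-\phi^{n+3}(z)|\le\alpha_{n+1}$ for every $z\in U_{\ell_n}$; the general case then follows from a single triangle inequality, since the composition $\phi^{n+3}$ on $G_n'$ collapses to $w\mapsto b_n(w-\kappa_n)+\kappa_{n+1}$, giving $|\phi^{n+3}(z)-\phi^{n+3}(z')|=\beta$ exactly. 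You instead carry $\alpha_{n+1}\beta$ through the translation steps and recover $\beta$ after the final $1/\alpha_{n+1}$ rescaling; the arithmetic is equivalent.

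One small point: your assertion that $\phi^j(z')\in V_{\ell_n+j}$ is not literally justified for arbitrary $z'\in U_{\ell_n}$, since $U_{\ell_n}$ may protrude outside $G_n$ and then $|b_n(z'-\kappa_n)|$ can exceed~$1$. What your argument actually needs is only that $\phi^j(z')$ lies in the domain where $\phi$ acts by the expected formula (translation on $\overline{D_{j-1}}$, scaling on $\overline{\Delta_{n+1}'}$), and this follows because the $z=z'$ estimate applied to $z'$ pins $\phi^j(z')$ within $\alpha_{n+1}^2$ of $f^j(z')\in U_{\ell_n+j}$. The paper's two-step decomposition makes this domain issue disappear automatically.

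For~(b), your caution that $w'=\phi^{\ell_n}(z')$ might not lie in $U_{\ell_n}$ is unnecessary: since $z'\in\overline{D(\zeta_0,r_0)}$, iterating \eqref{propa} gives $\phi^{\ell_n}(z')\in\overline{D(\kappa_n,r_{\ell_n})}\subset U_{\ell_n}$ by Theorem~\ref{thm:main construction}(i), so part~(a) applies verbatim --- which is exactly how the paper argues.
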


\begin{proof}

To prove part~(a), we begin by considering the case that $z=z'$. We first use induction to show that, if $z \in U_{\ell_n}$, for some $n \geq 0$, then
\begin{equation}\label{ersum}
|f^m(z) - \phi^m(z)| \leq \sum_{k=0}^{m-1}\varepsilon_{\ell_n +k}, \;\mbox{ for } 1 \leq m \leq n+2.
\end{equation}
We note that, for $m=1$, this holds by Theorem~\ref{thm:main construction} part~(ii). Now assume that~\eqref{ersum} holds for some $m$, $1 \leq m < n+2$. We have
\begin{equation}\label{erm}
|f^{m+1}(z) - \phi^{m+1}(z)| \leq |f(f^m(z)) - \phi(f^m(z))| + |\phi(f^m(z)) - \phi^{m+1}(z)|.
\end{equation}
Since $z \in U_{\ell_n}$, we have $f^{m}(z) \in U_{\ell_n+m} \subset D(\zeta_{\ell_n+m},R_{\ell_n+m})$ and $\phi^m(z) \in D(\zeta_{\ell_n+m},R_{\ell_n+m})$. Also, $\phi$ is a translation on $D(\zeta_{\ell_n+m},R_{\ell_n+m})$ and so, together with {\red Theorem~\ref{thm:main construction}} part~(ii), we can deduce from~\eqref{erm} that
\[
|f^{m+1}(z) - \phi^{m+1}(z)| \leq \varepsilon_{\ell_n +m} + |f^m(z) - \phi^m(z)| \leq \sum_{k=0}^{m}\varepsilon_{\ell_n +k}.
\]
Thus~\eqref{ersum} holds as claimed.

Next, we note that {\red for $z\in U_{\ell_n}$ we have} $f^{n+2}(z), \phi^{n+2}(z) \in \Delta'_n$, on which $\phi$ is a scaling by a factor of $1/\alpha_{n+1}$ followed by a translation and so, by~\eqref{ersum} and Theorem~\ref{thm:main construction} part~(ii),
\begin{eqnarray*}
  |f^{n+3}(z) - \phi^{n+3}(z)| & \leq & |f(f^{n+2}(z)) - \phi(f^{n+2}(z))| + |\phi(f^{n+2}(z)) - \phi^{n+3}(z)|  \\
   &\leq &  \varepsilon_{\ell_n +n+2} + \frac{1}{\alpha_{n+1}} \sum_{k=0}^{n+1}\varepsilon_{\ell_n +k} \\
   &\leq & \frac{\alpha_{n+1}}{2^{n+3}} + {\red \alpha_{n+1}}\sum_{k=0}^{n+1} \frac{1}{2^{k+1}}\\
   & < & {\red\alpha_{n+1}}.
\end{eqnarray*}
This shows that
\begin{equation}\label{estz}
|f^{n+3}(z) - \phi^{n+3}(z)| \leq \alpha_{n+1},
\end{equation}
which is the result of part~(a) in the case that $z=z'$.

We now use this fact to prove {\red part~(a) in general}. If $z,z' \in U_{\ell_n}$, for some $n \geq 0$, then it follows from~\eqref{estz} and the definition of $\phi$ that
\begin{eqnarray*}
  |f^{n+3}(z) - \phi^{n+3}(z')| & \leq & |f^{n+3}(z) - \phi^{n+3}(z))| + |\phi^{n+3}(z) - \phi^{n+3}(z')| \\
   & \leq &   \alpha_{n+1} + |b_n(z - \kappa_n) - b_n(z'-\kappa_n)|.
\end{eqnarray*}
This completes the proof of part~(a).

Now we suppose that {\red $z, z' \in \overline{D(\zeta_0,r_0)}$}. It follows from Theorem~\ref{thm:main construction} part~(i) that {\red $z \in U_0$} and hence $f^{\ell_n}(z) \in U_{\ell_n}$, for $n \geq 0$. It also follows from~\eqref{propa} in the proof of Theorem~\ref{thm:main construction} that {\red $\phi^{\ell_n}(z') \in D(\zeta_{\ell_n},r_{\ell_n})$} and hence, by Theorem~\ref{thm:main construction} part~(i), that $\phi^{\ell_n}(z') \in U_{\ell_n}$, for $n \geq 0$. So part~(b) follows from part~(a) by replacing $z$ and $z'$ by $f^{\ell_n}(z)$ and $\phi^{\ell_n}(z')$ respectively.
\end{proof}

Our next result gives a {\red delicate property of a Blaschke product that appears in one of our examples.} We use this property in the proof of {\red Lemma~\ref{lem:a2} part~(a).}

\begin{lem}\label{lem:b-scaling}
Let $b(z) =\left(\frac{z+1/3}{1+z/3}\right)^2$ and suppose that $0<r<1$. Then $0<r<b(r)<1$ and
\begin{equation}\label{b-ineq}
\left|\frac{b(x)-b(r)}{x-r}\right| < \frac{b^2(r)-b(r)}{b(r)-r}, \quad \text{for } 0<x< b(r).
\end{equation}
\end{lem}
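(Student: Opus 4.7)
The plan is first to verify the easy bound $0 < r < b(r) < 1$. Introducing the M\"obius factor $\tau(z) := (z+1/3)/(1+z/3)$ so that $b = \tau^2$, putting things over a common denominator yields the explicit factorisations
\[
b(r) - r = \frac{(1-r)^3}{(3+r)^2} \quad \text{and} \quad 1 - b(r) = \frac{8(1-r^2)}{(3+r)^2},
\]
both of which are visibly positive on $(0,1)$.

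For the main inequality, set $r_1 := b(r)$ and $r_2 := b^2(r)$, and consider the divided difference
\[
F(x) := \frac{b(x) - b(r)}{x - r}, \qquad x \in [0, r_1]\setminus\{r\},
\]
extended continuously by $F(r) := b'(r)$. Factoring $b(x) - b(r) = (\tau(x) - \tau(r))(\tau(x) + \tau(r))$ and applying the standard M\"obius identity $(\tau(x) - \tau(r))/(x - r) = 8/[(3+x)(3+r)]$, a short calculation yields the closed form
\[
F(x) = \frac{16\,[(5+3r)x + (5r+3)]}{(3+x)^2(3+r)^2},
\]
from which $F(x) > 0$ on $[0, r_1]$ is immediate. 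Hence the absolute value in the statement is redundant, and it suffices to prove $F(x) < F(r_1)$ for $0 \le x < r_1$, where $F(r_1) = (r_2 - r_1)/(r_1 - r)$.

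The strategy is to show that $F$ is strictly increasing on $[0, r_1]$. Differentiating the explicit formula above, a direct computation gives $F'(x) > 0$ precisely when $x < (9-r)/(5+3r)$, so it is enough to verify that
\[
r_1 = b(r) \le \frac{9-r}{5+3r} \quad \text{for all } r \in (0,1).
\]
Clearing denominators reduces this to a cubic polynomial inequality; since both sides agree at $r = 1$, the difference contains a factor of $(1-r)$, and polynomial division gives
\[
(9-r)(3+r)^2 - (5+3r)(3r+1)^2 = 4(1-r)(7r^2 + 22r + 19),
\]
which is manifestly positive on $(0,1)$.

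The only delicate point is this last polynomial inequality: the curves $r \mapsto b(r)$ and $r \mapsto (9-r)/(5+3r)$ meet tangentially at the boundary point $r = 1$, and it is precisely this tangency that produces the $(1-r)$ factor and makes the estimate sharp. Once this is in hand, the strict monotonicity of $F$ on $[0, r_1]$ yields $F(x) < F(r_1)$ for all $x \in [0, r_1)$, completing the proof.
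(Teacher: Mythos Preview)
Your proof is correct and takes a genuinely different route from the paper's.

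The paper argues via the Schwarzian derivative: since $b$ is the square of a M\"obius map, the composition rule gives $Sb<0$ on $(-3,\infty)$, and the classical fact that maps with negative Schwarzian contract the cross-ratio $(a,b,c,d)=\frac{(b-a)(d-c)}{(d-a)(c-b)}$ is applied to the four points $r,x,b(r),1$ (respectively $x,r,b(r),1$) together with the convexity of $b$ on $(0,1)$. This is a conceptual, dynamics-flavoured argument that would transfer to any increasing map with negative Schwarzian and a parabolic or attracting fixed point at~$1$.

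Your approach, by contrast, is a direct and entirely elementary computation: you write down the divided difference $F(x)$ in closed form, differentiate, and reduce the monotonicity of $F$ on $[0,b(r)]$ to the single polynomial inequality $(9-r)(3+r)^2-(5+3r)(3r+1)^2=4(1-r)(7r^2+22r+19)>0$. This requires no background theory beyond calculus and polynomial algebra, and it handles both ranges $0<x<r$ and $r<x<b(r)$ in one stroke rather than as separate cases. The trade-off is that the argument is tailored to this specific~$b$; the Schwarzian route explains \emph{why} such an inequality should hold and would adapt to other Blaschke products of the same form, whereas your computation would have to be redone from scratch. For the purposes of this particular lemma, however, your proof is cleaner and fully self-contained.
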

\begin{proof}
Our proof is based on a useful relationship between the cross-ratio of four points $a<b<c<d$ on $\R$, defined as
\[
(a,b,c,d)=\frac{(b-a)(d-c)}{(d-a)(c-b)},
\]
and the Schwarzian derivative of a real function~$f$, defined as
\[
Sf = \frac{f'''}{f''}-\frac32\left(\frac{f''}{f'}\right)^2.
\]
It is well-known that if $f$ is monotonic on an interval $I$ and $Sf<0$ on $I$, then
\begin{equation}\label{Sf-neg}
(f(a),f(b),f(c),f(d))<(a,b,c,d), \quad \text{whenever } a,b,c,d\in I, \; a<b<c<d.
\end{equation}
See, for example, de Melo and van Strien \cite[Section~1]{deMelovanStrien} for a good account of the relationship between functions with negative Schwarzian and the cross-ratio, including a proof of the above fact. Other key properties (also mentioned in \cite{deMelovanStrien}) are that M\"obius maps have zero Schwarzian and the composition rule for Schwarzians is
\[
S(g\circ f)(x) = Sg(f(x))(f'(x))^2+Sf(x).
\]
Since the Schwarzian derivative of a M\"{o}bius map is zero on its domain in $\R$, it follows immediately from this composition rule that the function~$b$ has negative Schwarzian on the interval $(-3,\infty)$.

It is straightforward to check that $1$ is a fixed point of the function~$b$ and that $r<b(r)<1$, for $r \in (0,1)$. Note also that~$b$ is increasing on $(-1/3, \infty)$ and convex on $(-3,1)$. We first prove \eqref{b-ineq} in the case when $r<x<b(r)$, by considering the four points $r,x,b(r),1$. Since $b$ is increasing on $(-1/3,\infty)$ and has negative Schwarzian there, we deduce that
\[
 \frac{(b(x)-b(r))(1-b^2(r))}{(1-b(r))(b^2(r)-b(x))}<\frac{(x-r)(1-b(r))}{(1-r)(b(r)-x)}.
\]
Since $b$ is convex on $(0,1)$ we deduce that
\[
 \frac{1-b(r)}{1-r}<\frac{1-b^2(r)}{1-b(r)}.
\]
We deduce from the previous  two inequalities that
\[
 \frac{b(x)-b(r)}{b^2(r)-b(x)}<\frac{(1-b(r))^2}{(1-r)(1-b^2(r))} \frac{x-r}{b(r)-x}<\frac{x-r}{b(r)-x},
\]
and hence (by taking reciprocals and adding 1 to both sides) that
\[
 \frac{b(x)-b(r)}{b^2(r)-b(r)}<\frac{x-r}{b(r)-r}.
\]
This proves \eqref{b-ineq} in the case when $r<x<b(r)$.

For the case when $0<x<r$, similar reasoning can be used with the points $x,r, b(r), 1$, to deduce that
\[
\frac{b(r)-b(x)}{b^2(r)-b(r)} < \frac{r-x}{b(r)-r}.
\]
This completes the proof of Lemma~\ref{lem:b-scaling}.
\end{proof}

The following lemma describes {\red dynamical} properties of the transcendental entire functions arising from Theorem \ref{thm:main construction} when using specific Blaschke products of a certain form. Two of our examples will be constructed using these Blaschke products. The proof of this result takes several pages.


\begin{lem}
\label{lem:a2}
Let $b(z)=\left(\frac{z+a}{1+az}\right)^2$, where $a \in[1/3,1)$, and let~$f$ be an entire function arising by applying Theorem~\ref{thm:main construction} with $b_n = b$, for $n \geq 0$.
\begin{itemize}
\item[(a)] If $a=1/3$ then there exist {\red $x, y \in U_0\cap \R$}, $N\in\N$ and $c>0$, with $f^n(x)\ne f^n(y)$ for $n\ge 0$, such that
\[
f^{\ell_{N}}(x)=\kappa_N,  \quad\text{and}\quad\kappa_{n}+1-f^{\ell_{n}}(x)\sim \frac{c}{n^{1/2}}\;\text{ as }n\to\infty\]
and
\[
f^{\ell_{N}}(y) = \kappa_N + 1/9, \quad\text{and}\quad\kappa_{n}+1-f^{\ell_{n}}(y) \sim \frac{c}{(n+1)^{1/2}}\;\text{ as }n\to\infty,
\]
and moreover
\[f^{\ell_n}(y)-f^{\ell_n}(x)= \frac{O(1)}{n^{3/2}} \;\text{ as}\;n \to \infty. \]
\item[(b)] If $a=1/2$ then there exist $x, y \in U_0\cap \R$ and $N\in\N$, with $f^n(x)\ne f^n(y)$ for $n\ge 0$, such that
\[
f^{\ell_{N}}(x)= \kappa_n \quad\text{and}\quad\kappa_{n}+1-f^{\ell_{n}}(x) = c \lambda^n(1+\eta_n),\quad\text{for }n\ge N,
\]
and
\[
f^{\ell_{N}}(y)= \kappa_n + 1/4 \quad\text{and}\quad\kappa_{n}+1-f^{\ell_{n}}(y) = c \lambda^{n+1}(1+\xi_n),\quad\text{for } n \ge N,
\]
where $c>0$, $\lambda =2/3$ and $\max\{|\eta_n|,|\xi_n|\}\le 1/10$, for $n \ge N$.
\end{itemize}
\end{lem}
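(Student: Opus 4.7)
The plan is to reduce the dynamics of $f^{n+3}=f^{\ell_{n+1}-\ell_n}$ on $U_{\ell_n}$ to the iteration of the Blaschke product $b$ recentred at $\kappa_n$. A direct computation from the definition of $\phi$ in Theorem~\ref{thm:main construction} gives $\phi^{n+3}(z)=b(z-\kappa_n)+\kappa_{n+1}$ for $z\in G_n$, and combining this with Lemma~\ref{orberr}(a) (with $z=z'$) shows that $w_n(z):=f^{\ell_n}(z)-\kappa_n$ satisfies
\[
w_{n+1}(z)=b(w_n(z))+E_n(z),\qquad |E_n(z)|\le \alpha_{n+1},
\]
whenever $z\in U_0\cap\R$ has real orbit under the real-symmetric version of~$f$ guaranteed by Remark~\ref{rem2}. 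Setting $u_n:=1-w_n$ and $g(u):=1-b(1-u)$ moves the boundary fixed point to~$0$; a short expansion gives $g(u)=u-u^3/16+O(u^4)$ in case~(a) and $g(u)=(2/3)u+O(u^2)$ in case~(b), so $u=0$ is parabolic in~(a) and attracting with multiplier $\lambda=2/3$ in~(b).

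To produce $x$ and $y$ I would pull back along the orbit. Since $f$ and the wandering domains are real-symmetric, and since every value in $[0,1]$ has two real preimages under $b$ in $[-1,1]$ in either case, a real preimage can be chosen at every step; the maps $f\colon U_m\to U_{m+1}$ have degree one except at $m=\ell_n$, where they are conjugate to $b$. Choosing $N$ large enough that $\kappa_N$ and $\kappa_N+b(0)$ both lie in $\overline{D(\kappa_N,r_{\ell_N})}\subset U_{\ell_N}$ (with $b(0)=1/9$ in~(a) and $b(0)=1/4$ in~(b)), surjectivity of $f^{\ell_N}\colon U_0\to U_{\ell_N}$ yields real points $x,y\in U_0$ with $f^{\ell_N}(x)=\kappa_N$ and $f^{\ell_N}(y)=\kappa_N+b(0)$.

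For the asymptotics I would run the standard parabolic/hyperbolic analysis with error control. In case~(a), setting $v_n:=u_n(x)^{-2}$ and using the expansion of $g$ gives
\[
v_{n+1}-v_n=\tfrac{1}{8}+O(u_n)+O\!\left(\alpha_{n+1}u_n^{-3}\right);
\]
a bootstrap first bounds $u_n=O(n^{-1/2})$, after which the last term is summable since the $\alpha_n$ decay geometrically by the construction, and one concludes $v_n=n/8+O(1)$, hence $u_n(x)\sim c\,n^{-1/2}$ for some $c>0$. In case~(b), writing $r_n:=u_n(x)/\lambda^n$ one checks $|r_{n+1}-r_n|=O(\lambda^n)+O(\alpha_{n+1}\lambda^{-n-1})$, which is geometrically summable, so $r_n\to c>0$ and $u_n(x)=c\lambda^n(1+\eta_n)$ with $\eta_n\to 0$; enlarging~$N$ forces $|\eta_n|\le 1/10$. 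In both cases $u_N(y)=1-b(0)$ agrees with $u_{N+1}(x)=1-b(0)+O(\alpha_{N+1})$, so the orbit of~$y$ shadows that of~$x$ shifted by one step, which gives the stated asymptotics for~$y$ with the same constant~$c$.

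The ``moreover'' claim in~(a) requires a sharper comparison. Set $\delta_n:=f^{\ell_n}(y)-f^{\ell_n}(x)=w_n(y)-w_n(x)$. Since the shadowing places $w_{n-1}(y)$ in the interval $(w_{n-1}(x),b(w_{n-1}(x)))$ up to an error of order $\alpha_n$, Lemma~\ref{lem:b-scaling} applied with $r=w_{n-1}(x)$ gives
\[
|\delta_n|\le \frac{b^2(r)-b(r)}{b(r)-r}\,|\delta_{n-1}|+O(\alpha_{n+1}).
\]
The ratio in front equals $(u_n(x)/u_{n-1}(x))^3(1+O(n^{-3/2}))=1-3/(2n)+O(n^{-3/2})$ by the expansion of~$g$, so iterating this multiplicative bound from~$N$ to~$n$ produces a factor of order $(N/n)^{3/2}$, while the accumulated additive errors are geometrically negligible; hence $|\delta_n|=O(n^{-3/2})$. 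The main obstacle throughout is controlling the polynomial (as opposed to exponential) convergence in case~(a) through the perturbation, while simultaneously proving that $\delta_n$ decays one full power of~$n$ faster than the individual orbits; both issues are resolved by arranging the $\alpha_n$ to decay rapidly (which is permitted by Theorem~\ref{thm:main construction}) and by invoking Lemma~\ref{lem:b-scaling} to obtain the sharp contraction ratio in the $\delta_n$ recursion.
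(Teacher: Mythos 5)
Your proposal is correct in outline but follows a genuinely different route from the paper. The paper's proof introduces the exact $\phi$-orbit $r_n=b^{n-N}(0)$ and auxiliary sequences $x'_n=\kappa_n+r_n$, $y'_n=\kappa_n+r_{n+1}$, cites \cite[Lemma~6.2(c)]{BEFRS} for the parabolic asymptotics $1-b^n(0)\sim c/n^{1/2}$ and $b^{n+1}(0)-b^n(0)\sim d/n^{3/2}$, and then uses Lemma~\ref{lem:b-scaling} to prove a \emph{shadowing} estimate $|x_n-x'_n|\le\tfrac{1}{10}(r_{n+1}-r_n)$ by induction, from which the asymptotics for the $f$-orbit and the $O(n^{-3/2})$ bound on $f^{\ell_n}(y)-f^{\ell_n}(x)$ fall out by the triangle inequality through $x'_n$ and $y'_n$. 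You instead work \emph{directly} with the perturbed recursion $w_{n+1}=b(w_n)+E_n$, introduce the Fatou-type coordinate $v_n=u_n^{-2}$ in the parabolic case (with the correct cubic expansion $g(u)=u-u^3/16+O(u^4)$, which I verified) and the renormalised coordinate $u_n/\lambda^n$ in the hyperbolic case, and deduce the asymptotics from the near-telescoping recursion after verifying the perturbation terms are summable; you then apply Lemma~\ref{lem:b-scaling} directly to the difference sequence $\delta_n$ to get the sharper cubic-contraction factor $(u_n/u_{n-1})^3(1+o(1))$ for the ``moreover'' estimate. Both routes use Lemma~\ref{lem:b-scaling} essentially, but deploy it in different places.

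What the paper's shadowing approach buys is that all asymptotics are imported from the exact Blaschke orbit, so no parabolic/linearisation analysis is needed; one only has to verify that the perturbed orbit stays within a tiny distance of the exact one, which is exactly what Lemma~\ref{lem:b-scaling} controls. Your direct approach is more self-contained (it rederives the asymptotics rather than citing them) but requires a non-trivial bootstrap: to bound the error term $O(\alpha_{n+1}u_n^{-3})$ in the $v_n$-recursion you must simultaneously establish a lower bound on $u_n$ (that it does not collapse faster than $n^{-1/2}$) before you can conclude summability, and this two-sided argument has to cope with the accumulating perturbations from the start. This works because the $\alpha_n$ decay geometrically while $u_n$ decays only polynomially, but it is a genuine extra step that the paper avoids. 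You should also spell out why real points $x,y\in U_0$ with $f^{\ell_N}(x)=\kappa_N$ and $f^{\ell_N}(y)=\kappa_N+b(0)$ exist: this uses that $b$ has real coefficients, hence by Remark~\ref{rem2} one may take $f$ real-symmetric so that $U_0$ meets $\R$ and, by Theorem~\ref{thm:main construction}(i), $\overline{D(\kappa_N,r_{\ell_N})}\subset U_{\ell_N}$, together with $f^{\ell_N}|_{U_0\cap\R}$ being a surjection onto $U_{\ell_N}\cap\R$ (the degree statement in (iv) gives this). The paper leaves the same point implicit, so this is not a gap specific to your argument.
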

\begin{proof}
First we observe that in both parts it is sufficient to prove the stated results about the behaviours of $f^{\ell_n-\ell_N}(x_N)$ and $f^{\ell_n-\ell_N}(y_N)$ when $x_N,y_N\in U_{\ell_N}$ for some particular positive integer~$N$.

(a) Recall that, by the analysis of the behaviour in~$\D$ of the iterates of~$b$ near its parabolic fixed point~1 (see \cite[Lemma 6.2 (c)]{BEFRS}, for example), there are positive constants~$c$ and~$d$ such that
\begin{equation}\label{1-rn}
1-b^n(0)\sim\frac{c}{n^{1/2}}\;\text{ as } n\to \infty
\end{equation}
and
\begin{equation}\label{rn+1-rn}
b^{n+1}(0)-b^n(0)\sim \frac{d}{n^{3/2}}\;\text{ as } n\to\infty.
\end{equation}
Therefore, we can choose $N$ so large that
\begin{equation}\label{m-large1}
\frac{d}{2n^{3/2}} < b^{n+1}(0)-b^{n}(0)<\frac{2d}{n^{3/2}},\quad \text{for }n\ge N,
\end{equation}
and also such that
\begin{equation}\label{m-large2}
\frac{4}{d\,6^{n}}\le \frac{1}{10},\quad \text{for } n\ge N.
\end{equation}

{\red We then take $r_n=b^{n-N}(0)$, for $n\ge N$, and define
\begin{equation}\label{xn-def}
x_N=\kappa_N\quad\text{and}\quad x_{n+1}=f^{n+3}(x_{n}),\quad\text{for }n\ge N,
\end{equation}
and
\[
x'_n:=\kappa_{n}+r_n\in G_n\cap \R, \quad \text{for } n \geq N.
\]
It follows from the definition of~$\phi$ that
\[
x'_{n+1}=\phi^{n+3}(x'_{n})=b(x'_n-\kappa_n),\quad\text{for }n\ge N.
\]
}
We use Lemma~\ref{lem:b-scaling} to show that the orbit of $x_N=\kappa_N$ under $f$ closely follows that of~$x_N$ under~$\phi$. More precisely, we shall show that
\begin{equation}\label{f-phi-est1}
|x_{n}-x'_{n}|< \frac{1}{10}(r_{n+1}-r_n),\quad\text{for } n \ge N+1.
\end{equation}
Note that it follows from \eqref{xn-def} that $x'_n \in U_{\ell_n}\cap \R$ since $x'_N=x_N\in U_{\ell_N}\cap \R$ and~$f$ is a real entire function.

We shall prove \eqref{f-phi-est1} by using induction to show that
\begin{equation}\label{induction}
|x_n-x'_n|\le (r_{n+1}-r_n)\sum_{k=N+1}^{n}\frac{1}{6^k(r_{k+1}-r_k)},\quad\text{for } n \ge N+1.
\end{equation}
Before proving \eqref{induction}, we show that it implies \eqref{f-phi-est1}. Using~\eqref{m-large1} and \eqref{m-large2}, it follows that, for $n\ge N+1$,
\begin{align*}
\frac{|x_n-x'_n|}{r_{n+1}-r_n} &\le\sum_{k=N+1}^{n}\frac{1}{6^k(r_{k+1}-r_k)}\\
&\le \sum_{k=N+1}^{n}\frac{2(k-N)^{3/2}}{d6^k}\\
&= \frac{2}{d\,6^{N+1}}\sum_{j=0}^{n-N-1}\frac{(j+1)^{3/2}}{6^{j}}\\
&\le \frac{4}{d\,6^{N+1}}<\frac{1}{10},
\end{align*}
since the sum in the penultimate expression is dominated by the geometric series $1+1/2+1/4+\cdots$. Thus \eqref{f-phi-est1} holds.

To start the proof of \eqref{induction}, we have
\[|x_{N+1}-x'_{N+1}|=|f^{N+3}(x_N)-\phi^{N+3}(x_N)|\le \frac{1}{6^{N+1}},\]
by Theorem~\ref{thm:main construction}\,(iv), since $x_N\in U_{\ell_N}$.
Now we assume that \eqref{induction} holds for some~$n\ge N+1$ and deduce that it holds for $n+1$. Note that, whenever \eqref{f-phi-est1} holds (and so whenever \eqref{induction} holds), we have $x'_n \in [\kappa_n,\kappa_n+1)$, by the definition of $x_n$.

By the definition of $\phi$, Lemma~\ref{orberr} part~(a), Lemma~\ref{lem:b-scaling} and Theorem~\ref{thm:main construction}~(iv), we have
\begin{align*}
|x_{n+1}-x'_{n+1}|
&\le \alpha_{n+1}+|b(x_{n}-\kappa_{n})-b(x'_{n}-\kappa_{n})|\\
&\le \frac{1}{6^{n+1}} +|x_{n}-x'_{n}|\left(\frac{b^2(x'_{n}-\kappa_{n})-b(x'_{n}-\kappa_{n})}{b(x'_{n}-\kappa_{n})-(x'_{n}-\kappa_{n})}\right)\\
&= \frac{1}{6^{n+1}}+|x_{n}-x'_{n}|\left(\frac{r_{n+2}-r_{n+1}}{r_{n+1}-r_n}\right)\\
&\le \frac{1}{6^{n+1}}+(r_{n+2}-r_{n+1})\sum_{k=N+1}^{n}\frac{1}{6^k(r_{k+1}-r_k)}\\
&=(r_{n+2}-r_{n+1})\sum_{k=N+1}^{n+1}\frac{1}{6^k(r_{k+1}-r_k)}.
\end{align*}
This proves \eqref{induction}, so \eqref{f-phi-est1} holds.

{\red Next, we define
\[
y_N=\kappa_N+b(0)=\kappa_N+1/9\quad\text{and}\quad y_{n+1}=f^{n+3}(y_{n}),\quad\text{for }n\ge N,
\]
and
\[
y'_n := \kappa_n+ r_{n+1}\in G_n\cap\R,\quad \text{for } n\ge N,
\]
with the same value of~$N$ used earlier. Then
\begin{equation}\label{ynxn}
y'_n-x'_n=r_{n+1}-r_n=b^{n+1-N}(0)-b^{n-N}(0),\;\text{ for } n\ge N,
\end{equation}
and
\[
y'_{n+1}=\phi^{n+3}(y'_{n})=b(y'_n-\kappa_n),\quad\text{for }n\ge N.
\]
}
Reasoning as above we obtain
\begin{equation}\label{f-phi-est2}
|y_{n}-y'_{n}|\le \frac{1}{10}(r_{n+2}-r_{n+1}),\quad\text{for } n \ge N+1.
\end{equation}

Combining \eqref{1-rn} and \eqref{rn+1-rn} with \eqref{f-phi-est1} and \eqref{f-phi-est2}, we obtain
\[
\kappa_{n}+1-x_n\sim \frac{c}{n^{1/2}}\;\text{ as }n\to\infty
\]
and
\begin{eqnarray}
|y_n-x_n| &\leq& |y_n-y'_n|+|y'_n-x'_n|+|x'_n-x_n|\nonumber \\
&\leq& \frac{1}{10}(r_{n+2}-r_{n+1})+(r_{n+1}-r_n)+ \frac{1}{10}(r_{n+1}-r_n) \nonumber \\
&=& \frac{O(1)}{n^{3/2}} \; \text{ as}\;\; n \to \infty, \nonumber
\end{eqnarray}
which gives the required result by taking $x,y \in U_0$ such that $f^{\ell_N}(x) =x_N = \kappa_N$ and $f^{\ell_N}(y)=y_N = \kappa_N + b(0) = \kappa_N + 1/9$. {\red Note that $y_n \ne x_n$ for $n\ge N$, by \eqref{f-phi-est1}, \eqref{ynxn} and \eqref{f-phi-est2}, so we deduce that $f^n(x)\ne f^n(y)$ for $n\ge 0$.}

(b) The proof of part~(b) is similar to that of part~(a), and we outline the argument briefly.

As in part~(a), we take $r_n=b^{n-N}(0)$, for $n\ge N$, for some sufficiently large $N\in \N$ to be specified later in the proof, and put
{\red \begin{equation}\label{xn-def2}
x_N=\kappa_N\quad\text{and}\quad x_{n+1}=f^{n+3}(x_{n}),\quad\text{for }n\ge N,
\end{equation}
and
\[
x'_n:=\kappa_{n}+r_n\in G_{n}\cap\R, \quad \text{for } n \ge N,
\]
so once again
\[
x'_{n+1}=\phi^{n+3}(x'_{n})=b(x'_n-\kappa_n),\quad\text{for }n\ge N.
\]}
Now note that the function~$b$ has fixed point~1 with multiplier $\lambda=2/3$. It follows that
\begin{equation}\label{local-b-est}
1-r_{n+1}=\lambda(1-r_n)(1+O(1-r_n))\;\text{ as } n\to\infty,
\end{equation}
so, for some constant $c>0$,
\begin{equation}\label{lambda-est}
\kappa_{n}+1-x_n=1-r_n \sim c\lambda^n\;\text{ as } n\to\infty.
\end{equation}
Also, since~$b$ is univalent in the disc $\{z:|z-1|<1\}$ (or by a direct calculation), we have
\[
|b'(z)|\leq \lambda(1+C|z-1|),\:\text{ for } |z-1|<1/2,
\]
where $C$ is a positive constant, and so
\begin{equation}\label{b-deriv-est}
|b(w)-b(z)|\le \lambda (1+C|z-1|)|w-z|,\quad\text{for } |w-z|<1/4, |z-1|<1/4.
\end{equation}

As in part~(a), we show that the orbit of $x_N$ under~$f$ closely follows that of $x_N$ under $\phi$. To be precise, we claim that for~$N$ sufficiently large we have
\begin{equation}\label{f-phi-est1-b}
|x_{n}-x'_{n}|\le  \frac{c}{10}\lambda^{n},\;\text{ for } n\ge N.
\end{equation}
Indeed, for $n\ge N$, we have
\begin{align*}
|x_{n+1}-x'_{n+1}|
&\le \alpha_{n+1}+|b(x_{n}-\kappa_{n})-b(x_{n}-\kappa_{n})|\\
&\le \frac{1}{6^{n+1}}+\lambda(1+C(1-r_{n}))|x_{n}-x'_{n}|,
\end{align*}
by Lemma~\ref{orberr} part~(a), Theorem~\ref{thm:main construction}\,(iv), \eqref{lambda-est} and \eqref{b-deriv-est}, provided that~$N$ is sufficiently large. Since $x_N=x'_N=\kappa_N$, it follows easily by induction that, for $n\ge N+1$, we have
\[
\delta_n\le \left(\prod_{k=N+1}^{\infty}(1+C(1-r_k))\right)\left(\sum_{k=N+1}^{n}\frac{1}{(6\lambda)^{k}}\right),\;\text{ where }\delta_n=\frac{|x_{n}-x'_{n}|}{\lambda^n},
\]
and \eqref{f-phi-est1-b} easily follows by \eqref{lambda-est} and by taking~$N$ sufficiently large.

{\red We obtain the first estimate in part~(b) by taking $x \in U_0$ such that $f^{\ell_N}(x) =x_N = \kappa_N$. The second estimate follows by a similar argument but this time we use an orbit under~$f$ whose subsequence passing through $U_{\ell_n}$, $n\ge N$, closely follows the sequence $y'_n := \kappa_n+ r_{n+1}$, $n\ge N$, by taking $y \in U_0$ such that $f^{\ell_N}(y)=y_N = \kappa_N + b(0) = \kappa_N + 1/4$. The proof that $f^n(x)\ne f^n(y)$ for $n\ge 0$ uses \eqref{f-phi-est1-b} and is similar to that in part~(a).}
\end{proof}

Finally {\red in this section}, we give {\red several} estimates for a Blaschke product used in {\red another} of our examples.
\begin{lem}\label{lem:semi}
For $n \geq 0$, let $b_n (z)= \tilde{\mu_n}(\mu_n (z)^2),$ where
\[
\mu_n(z)= \frac{z+s_n}{1+s_nz} \mbox{ and } \tilde{\mu_n}(z)=\frac{z-s_n^2}{1-s_n^2z},
\]
and let
\[
 \lambda_n = \frac{2s_n}{1+s_n^2},
\]
where $s_n \in (0,1)$. Then, for $n\geq 0$,
{\red
\begin{equation}\label{semi1}
\lambda_n\,x \leq \left(\frac{x+ \lambda_n}{1+ \lambda_nx}\right)x= b_n(x) \leq x, \;\mbox{ for }0<x <1,
\end{equation}
and
\begin{equation}\label{semi2}
\lambda_n (y-x) \le b_n(y)-b_n(x)\le \frac{2}{1+\lambda_n}(y-x) , \;\mbox{ for } 0\leq x<y<1.
\end{equation}
}
\end{lem}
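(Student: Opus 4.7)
The plan is to first establish the closed-form expression
\[
b_n(x)=\frac{x(x+\lambda_n)}{1+\lambda_n x},
\]
which is precisely the identity asserted in the middle of \eqref{semi1}. This is a direct algebraic reduction: expanding $\mu_n(x)^2$ and $\tilde\mu_n(w)$, the numerator of $\tilde\mu_n(\mu_n(x)^2)$ factors as $x(1-s_n^2)[(1+s_n^2)x+2s_n]$ and the denominator factors as $(1-s_n^2)[(1+s_n^2)+2s_nx]$; cancelling the common factor $(1-s_n^2)$ and dividing by $(1+s_n^2)$ produces exactly $\lambda_n=2s_n/(1+s_n^2)$ in both the numerator and denominator brackets. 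Conceptually, the two Blaschke factors have been engineered so that the composition is a degree-2 Blaschke product with zeros at $0$ and $-\lambda_n$; once this is seen, the closed form is the only candidate.

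Granted that closed form, \eqref{semi1} follows immediately. Write $b_n(x)=x\cdot m(x)$, where $m(x)=(x+\lambda_n)/(1+\lambda_n x)$. The map $m$ is a M\"obius transformation with $m'(x)=(1-\lambda_n^2)/(1+\lambda_n x)^2>0$ and boundary values $m(0)=\lambda_n$, $m(1)=1$, so $\lambda_n\le m(x)\le 1$ on $[0,1]$; multiplying by $x\in(0,1)$ gives $\lambda_n x\le b_n(x)\le x$.

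For \eqref{semi2} I would apply the mean value theorem after proving the uniform derivative estimate
\[
\lambda_n \le b_n'(x)\le \frac{2}{1+\lambda_n},\quad\text{for } x\in[0,1].
\]
A short computation from the closed form gives
\[
b_n'(x)=\frac{\lambda_n(1+x^2)+2x}{(1+\lambda_n x)^2},
\]
with equality at the two ends: $b_n'(0)=\lambda_n$ and $b_n'(1)=2/(1+\lambda_n)$, matching the constants in the claim. Clearing denominators, the lower bound reduces to $(1-\lambda_n^2)(\lambda_n x^2+2x)\ge 0$, which is immediate. For the upper bound, after cross-multiplying one has to prove that $2-\lambda_n-\lambda_n^2-2(1-\lambda_n)x-\lambda_n(1-\lambda_n)x^2\ge 0$ on $[0,1]$; the key observation is that $2-\lambda_n-\lambda_n^2=(1-\lambda_n)(2+\lambda_n)$, so the whole expression factors as $(1-\lambda_n)\bigl[2(1-x)+\lambda_n(1-x^2)\bigr]$, which is manifestly non-negative since $\lambda_n\in(0,1)$ and $x\in[0,1]$.

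There is no real obstacle: once the closed form for $b_n$ is in hand, the rest is a bounded-interval estimate on a fixed rational function, and the only mildly delicate step is spotting the factorization $2-\lambda_n-\lambda_n^2=(1-\lambda_n)(2+\lambda_n)$ that makes non-negativity of the upper-bound inequality transparent.
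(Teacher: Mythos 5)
Your proof is correct, and the closed form and part (a) are obtained exactly as in the paper. For \eqref{semi2} you take a genuinely different route: the paper writes out the difference directly and factors it,
\[
b_n(y)-b_n(x)=(y-x)\,\frac{y+x+\lambda_n(1+xy)}{(1+\lambda_n y)(1+\lambda_n x)},
\]
and then reads off both bounds from $0<\lambda_n<1$ and $0\le x<y<1$, with no calculus at all. You instead differentiate the closed form, establish $\lambda_n\le b_n'\le 2/(1+\lambda_n)$ on $[0,1]$ via two polynomial factorizations, and invoke the mean value theorem. Both arguments are correct and elementary; the paper's algebraic identity is a bit shorter, while your derivative computation has the merit of identifying $b_n'(0)=\lambda_n$ and $b_n'(1)=2/(1+\lambda_n)$, which makes it transparent that the constants in \eqref{semi2} are sharp.
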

\begin{proof}
For $x \in (0,1)$ and $n \geq 0$, we have
\begin{eqnarray*}
b_n(x)&=& \frac {\left(\frac{x+s_n}{1+s_nx}\right)^2-s_n^2}{1- s_n^2\left(\frac{x+s_n}{1+s_nx}\right)^2}  \\
&=& \frac{(x+s_n)^2-s_n^2(1+s_nx)^2}{(1+s_nx)^2-s_n^2(x+s_n)^2} \\
&=& \frac{x^2+2s_nx-2s_n^3x-s_n^4x^2}{1+2s_nx-2s_n^3x-s_n^4}  \\
& = & \left(\frac{(1-s_n^4)x + (1-s_n^2)2s_n}{1-s_n^4 + (1-s_n^2)2s_nx}\right)x\\
& = & \left(\frac{(1+s_n^2)x + 2s_n}{1+s_n^2 + 2s_nx}\right)x\\
&=& \left(\frac{x+ \lambda_n}{1+ \lambda_nx}\right)x.
\end{eqnarray*}
Since
\[a \leq \frac{x+a}{1+ax}\leq 1,\]
for $x,a \in [0,1],$ part~(a) follows.

For part~(b), we deduce from the expression for $b_n$ given in part~(a) that, for $0\leq x<y<1$ and $n\geq 0$,
\begin{eqnarray}
b_n(y)-b_n(x)&=& y \left(\frac{y+\lambda_n}{1+\lambda_ny}\right) -   x \left(\frac{x+\lambda_n}{1+\lambda_nx}\right) \nonumber \\
&=& (y-x) \frac{y+x+ \lambda_n(1+xy)}{(1+\lambda_ny)(1+\lambda_n x)}, \nonumber
\end{eqnarray}
{\red and the conclusion then easily follows from the facts that $0<\lambda_n<1$, for $n \geq 0$, and $0\leq x<y<1$.}
\end{proof}

\section{Proof of Theorem \ref{realizable}}
In this section we construct six examples of bounded oscillating wandering domains, based on the two classifications of simply connected wandering domain given in~\cite{BEFRS}. First, in terms of hyperbolic distances between orbits of points, simply connected wandering domains are classified as follows~(\cite[Theorem A]{BEFRS}.

\begin{thm}[First classification theorem] \label{thm:Theorem A introduction}
Let $U$ be a simply connected wandering domain   of a transcendental entire function $f$ and let $U_n$ be the Fatou component containing $f^n(U)$, for $n \in \N$. Define the countable set of pairs
\[
E=\{(z,z')\in U\times U : f^k(z)=f^k(z') \text{\ for some $k\in\N$}\}.
\]
Then, exactly one of the following holds.
\begin{itemize}
\item[\rm(1)] $\dist_{U_n}(f^n(z), f^n(z'))\lran c(z,z')= 0 $ for all $z,z'\in U$, and we say that $U$ is {\em (hyperbolically) contracting};
\item [\rm(2)] $\dist_{U_n}(f^n(z), f^n(z'))\lran c(z,z') >0$ and $\dist_{U_n}(f^n(z), f^n(z')) \neq c(z,z')$
for all $(z,z')\in (U \times U) \setminus E$, $n \in \N$, and we say that $U$ is {\em  (hyperbolically) semi-contracting}; or
\item[\rm(3)] there exists $N>0$ such that for all $n\geq N$, $\dist_{U_n}(f^n(z), f^n(z')) = c(z,z') >0$ for all $(z,z') \in (U \times U) \setminus E$, and we say that $U$ is {\em (hyperbolically) eventually isometric}.
\end{itemize}
\end{thm}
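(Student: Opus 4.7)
The primary tool is the Schwarz--Pick lemma, which, applied to the holomorphic map $f|_{U_n}: U_n \to U_{n+1}$ between hyperbolic domains, immediately yields $d_{n+1}(z,z') \le d_n(z,z')$, where $d_n(z,z') := \dist_{U_n}(f^n(z), f^n(z'))$. Hence for every pair $(z,z') \in U \times U$, the limit $c(z,z') := \lim_{n\to\infty} d_n(z,z')$ exists in $[0,\infty)$. The rigidity clause of Schwarz--Pick further tells me that equality $d_{n+1}(z,z') = d_n(z,z')$ with $f^n(z) \ne f^n(z')$ holds if and only if $f|_{U_n}: U_n \to U_{n+1}$ is a covering map; since both $U_n$ and $U_{n+1}$ are simply connected (hence conformally equivalent to $\D$), this reduces to $f|_{U_n}$ being a conformal isomorphism. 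Crucially, a conformal isomorphism preserves the hyperbolic distance between \emph{all} pairs simultaneously.

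This all-or-nothing rigidity handles Case~(3). Suppose there exist $n_0 \ge 0$ and $(z_0,z_0') \notin E$ with $d_{n_0+1}(z_0,z_0') = d_{n_0}(z_0,z_0')$. Since $d_n(z_0,z_0')$ is non-increasing and bounded below by $c(z_0,z_0')$, equality at one step forces $d_n(z_0,z_0') \equiv d_{n_0}(z_0,z_0')$ for all $n \ge n_0$, so Schwarz--Pick equality at each such step makes $f|_{U_n}$ a conformal isomorphism. By the simultaneity of equality, $d_n(z,z') = d_{n_0}(z,z')$ for all $n \ge n_0$ and all pairs $(z,z')$, placing us in Case~(3) with $N = n_0$.

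If we are not in Case~(3), then at every $n$ and every $(z,z') \notin E$ we have $d_{n+1}(z,z') < d_n(z,z')$, and hence $d_n(z,z') > c(z,z')$ for all $n$. We are then in Case~(1) or Case~(2) according to whether $c \equiv 0$ on $(U \times U) \setminus E$ or $c > 0$ there. Proving this dichotomy is what I expect to be the main obstacle, since it is not immediately apparent that $c$ could not vanish on a proper nonempty subset of non-trivial pairs.

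To overcome the dichotomy I would uniformize: fix Riemann maps $\phi_n: \D \to U_n$ with $\phi_n(0) = f^n(z_0)$ for a reference point $z_0 \in U$, so that $g_n := \phi_{n+1}^{-1} \circ f \circ \phi_n$ are self-maps of $\D$ fixing the origin. Then $G_n := g_{n-1} \circ \cdots \circ g_0$ satisfies $d_n(z_0, z) = \dist_\D(0, G_n(\phi_0^{-1}(z)))$ for every $z \in U$. Since $|g_k'(0)| \le 1$, the product $\prod_k |g_k'(0)|$ either tends to zero or converges to a positive limit; I would argue, via a normal families argument on the sequence $(G_n)$ on compact subsets of $\D$, that the former forces $c(z_0, z) = 0$ for all $z$, while the latter yields a non-constant subsequential limit of $(G_n)$ and hence $c(z_0, z) > 0$ for all $z \ne z_0$ in the same Fatou component. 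Propagation from reference pairs $(z_0, \cdot)$ to arbitrary pairs via the triangle inequality $c(z,z') \le c(z_0,z) + c(z_0,z')$ would then complete the dichotomy. The subtle part is arranging the normalization of the $\phi_n$ so that compactness of $(G_n)$ yields genuine information rather than being defeated by the rotational indeterminacy in the choice of Riemann maps.
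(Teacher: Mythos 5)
This theorem is not proved in the present paper: it is quoted verbatim as Theorem~A of the companion classification paper by Benini, Evdoridou, Fagella, Rippon and Stallard and used here as a black box. There is therefore no in-paper proof to compare against; the remarks below assess your outline on its own terms.

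Your opening steps are sound. Schwarz--Pick gives that $d_n(z,z'):=\dist_{U_n}(f^n(z),f^n(z'))$ is non-increasing, and the rigidity clause together with simple connectedness shows that a single equality $d_{n+1}(z,z')=d_n(z,z')$ with $f^n(z)\neq f^n(z')$ forces $f|_{U_n}$ to be a conformal isomorphism, hence an isometry for all pairs simultaneously. But your treatment of Case~(3) then contains a genuine error. You claim that $d_{n_0+1}(z_0,z_0')=d_{n_0}(z_0,z_0')$ forces $d_n(z_0,z_0')\equiv d_{n_0}(z_0,z_0')$ for all $n\ge n_0$, citing only that the sequence is non-increasing and bounded below; this is false, since a non-increasing bounded sequence can flatten at one step and then decrease again (e.g.\ $1,1,\tfrac12,\tfrac14,\dots$). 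Knowing $f|_{U_{n_0}}$ is an isomorphism tells you nothing about $f|_{U_{n_0+1}}$, and the theorem's Case~(2) actually \emph{permits} $d_{n+1}(z,z')=d_n(z,z')$ at isolated steps --- what it forbids is $d_n(z,z')=c(z,z')$. The correct way to isolate Case~(3) is to ask whether the set $\{n: f|_{U_n}\text{ is not a conformal isomorphism}\}$ is finite. If it is contained in $\{0,\dots,N-1\}$ then $d_n\equiv d_N$ for $n\ge N$, giving Case~(3); if it is infinite then no non-trivial pair can attain its limit, for if $d_N(z,z')=c(z,z')$ then monotonicity forces $d_n(z,z')=c(z,z')$ and hence $f|_{U_n}$ an isomorphism for all $n\ge N$, a contradiction. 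This places you in Case~(1) or Case~(2), but your statement that ``not Case~(3)'' implies $d_{n+1}<d_n$ at every step is not what is needed or what is true.

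Your identification of the Case~(1) versus Case~(2) dichotomy as the main obstacle is exactly right, and passing to self-maps $g_n\colon\D\to\D$ fixing~$0$ with $G_n=g_{n-1}\circ\cdots\circ g_0$ is the right framework, but the proposed finish is also incomplete. If $\prod_k|g_k'(0)|>0$, a locally uniform subsequential limit~$G$ of $(G_n)$ is non-constant, but $G$ may vanish at points of $\D\setminus\{0\}$; for such a point $w=\phi_0^{-1}(z)$ you would get $\dist_\D(0,G(w))=0$ along that subsequence, and so you cannot conclude $c(z_0,z)>0$ from non-constancy of~$G$ alone. The fact that $G_n(w)\ne 0$ for every~$n$ (since $(z_0,z)\notin E$) does not prevent a subsequence from converging to~$0$, and a pointwise Hurwitz argument does not rescue this. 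Establishing the all-or-nothing behaviour of~$c$ on non-trivial pairs requires a quantitative estimate tracking the hyperbolic derivative along the orbit rather than bare normal-family compactness of $(G_n)$; this is the substantial input your sketch still needs.
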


Next, in terms of convergence of orbits to the boundary there are again three types of simply connected wandering domains (see~\cite[Theorem C]{BEFRS}), though only the latter two are realisable for oscillating wandering domains as explained in the Introduction.

\begin{thm}[Second classification theorem]\label{ThmC}
Let $U$ be a simply connected wandering domain  of a transcendental entire function $f$ and let $U_n$ be the Fatou component containing $f^n(U)$, for $n \in \N$. Then exactly one of the following holds:
\begin{itemize}
\item[\rm(a)]
  $\liminf_{n\to\infty} \operatorname{dist}(f^{n}(z),\partial U_{n})>0$  for all $z\in U$,
    that is, all orbits stay away from the boundary;
\item[\rm(b)] there exists a subsequence $n_k\to \infty$ for which $\operatorname{dist}(f^{n_k}(z),\partial U_{n_k})\to 0$ for all $z\in U$, while for a different subsequence $m_k\to\infty$ we have that
    \[\liminf_{k \to \infty} \operatorname{dist}(f^{m_k}(z),\partial U_{m_k})>0, \quad\text{for }z\in U;\]
\item[\rm(c)] $\operatorname{dist}(f^{n}(z),\partial U_{n})\to 0$ for all $z\in U$, that is, all orbits converge to the boundary.
\end{itemize}
\end{thm}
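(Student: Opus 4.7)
\medskip

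\noindent\textbf{Proof proposal.} The statement is a trichotomy, and since the three conditions (a), (b), (c) are mutually exclusive and exhaustive for a \emph{single} point $z\in U$, the whole content is that the condition that holds is \emph{independent of the choice of $z\in U$}. The plan is to prove this by showing that the two quantities
\[
\liminf_{n\to\infty} \operatorname{dist}(f^n(z),\partial U_n) \quad \text{and} \quad \limsup_{n\to\infty} \operatorname{dist}(f^n(z),\partial U_n)
\]
either are both positive, or both zero, \emph{simultaneously for all $z\in U$}; in fact I will show the stronger statement that for any $z,z'\in U$ these quantities differ at most by a multiplicative constant that is uniform in $n$.

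The key analytic input is the standard consequence of Koebe's $1/4$-theorem: on any simply connected proper subdomain $V\subsetneq\C$, the hyperbolic density $\rho_V$ satisfies
\[
\tfrac{1}{4\,\operatorname{dist}(w,\partial V)} \;\le\; \rho_V(w) \;\le\; \tfrac{1}{\operatorname{dist}(w,\partial V)}, \qquad w\in V.
\]
From this I would derive the following Lipschitz-type estimate: for any smooth curve $\gamma\colon[0,L]\to V$ parametrised with unit hyperbolic speed, writing $\delta(t)=\operatorname{dist}(\gamma(t),\partial V)$, the Lipschitz property of the Euclidean distance function gives $|\delta'(t)|\le|\gamma'(t)|\le 4\,\rho_V(\gamma(t))\delta(t)=4\delta(t)$, so that $|\,\tfrac{d}{dt}\log\delta(t)\,|\le 4$. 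Integrating along a hyperbolic geodesic between two points $w_1,w_2\in V$ therefore yields
\begin{equation}\label{KBlog}
\bigl|\log \operatorname{dist}(w_1,\partial V)-\log\operatorname{dist}(w_2,\partial V)\bigr|\;\le\; 4\,\operatorname{dist}_V(w_1,w_2).
\end{equation}

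Now fix $z,z'\in U$ and set $D:=\operatorname{dist}_U(z,z')<\infty$. Since $f$ is holomorphic on $\C$ and each $U_n$ is simply connected (and distinct from $\C$, as $U$ is a Fatou component of a transcendental entire function with a nonempty Julia set), the Schwarz--Pick inequality applied to the restrictions $f\colon U_k\to U_{k+1}$ gives $\operatorname{dist}_{U_n}(f^n(z),f^n(z'))\le D$ for every $n\ge 0$. Substituting $V=U_n$, $w_1=f^n(z)$, $w_2=f^n(z')$ in~\eqref{KBlog} yields
\[
e^{-4D}\,\operatorname{dist}(f^n(z'),\partial U_n)\;\le\;\operatorname{dist}(f^n(z),\partial U_n)\;\le\; e^{4D}\,\operatorname{dist}(f^n(z'),\partial U_n),\qquad n\ge 0.
\]
These two-sided bounds, uniform in $n$, immediately imply that $\liminf_n\operatorname{dist}(f^n(z),\partial U_n)>0$ iff $\liminf_n\operatorname{dist}(f^n(z'),\partial U_n)>0$, and similarly $\limsup_n\operatorname{dist}(f^n(z),\partial U_n)=0$ iff $\limsup_n\operatorname{dist}(f^n(z'),\partial U_n)=0$. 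Consequently the combined condition (liminf $>0$), (liminf $=0$ and limsup $>0$), or (limsup $=0$) is the same for every $z\in U$, which is precisely the trichotomy (a), (b), (c), with the two subsequences in (b) being extractable by a standard diagonal argument from the fact that liminf $=0$ and limsup $>0$ hold simultaneously.

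The main obstacle, and the only nontrivial step, is the log-Lipschitz estimate~\eqref{KBlog}; everything else is a direct consequence of the nonexpansiveness of $f$ in the hyperbolic metric. I would therefore present~\eqref{KBlog} as a self-contained lemma (it is in any case a classical fact on simply connected hyperbolic domains), and treat the rest as a short derivation. Strictly speaking one should also verify that $U_n\neq\C$ so the hyperbolic metric exists, but this is automatic for Fatou components of transcendental entire functions.
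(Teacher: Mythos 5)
The paper does not prove this statement: it is quoted verbatim as \cite[Theorem C]{BEFRS} and used as an external classification result, so there is no in-paper proof to compare your argument against. That said, your argument is essentially the standard (and almost certainly the intended) one: combine the Koebe-type estimate on the hyperbolic density of a simply connected proper domain with the Schwarz--Pick nonexpansiveness of $f^n\colon U\to U_n$, to conclude that $\log\operatorname{dist}(f^n(z),\partial U_n)$ and $\log\operatorname{dist}(f^n(z'),\partial U_n)$ stay within a uniform additive constant $4\operatorname{dist}_U(z,z')$ of each other, and then read off that the trichotomy is $z$-independent. Two small remarks. First, the displayed chain
\[
|\delta'(t)|\le|\gamma'(t)|\le 4\,\rho_V(\gamma(t))\delta(t)=4\delta(t)
\]
is garbled: for $\gamma$ at unit hyperbolic speed one has $|\gamma'(t)|=1/\rho_V(\gamma(t))\le 4\delta(t)$ directly from $\rho_V\ge \frac1{4\operatorname{dist}(\cdot,\partial V)}$; the intermediate expression $4\rho_V\delta$ is not sandwiched between these two quantities and the ``$=$'' is false. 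The conclusion $|\tfrac{d}{dt}\log\delta|\le 4$ is nevertheless correct, so this is only an expository slip. Second, you invoke ``a standard diagonal argument'' to produce the two subsequences in case (b); nothing of the sort is needed. Pick any $z_0\in U$ with $\liminf=0$ and $\limsup>0$, choose $n_k$ realising the $\liminf$ and $m_k$ realising the $\limsup$; your uniform two-sided bound then shows these \emph{same} subsequences work for every $z\in U$. Finally, you implicitly use that every $U_n$, not just $U_0=U$, is simply connected and a proper subdomain of $\C$; this is true (and is established in~\cite{BEFRS} as a standing consequence of Baker's work on multiply connected Fatou components), but it deserves a sentence rather than silent use, since both the Koebe bound and the log-Lipschitz estimate genuinely require simple connectivity of $U_n$.
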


Each of the examples in this section is constructed by applying Theorem~\ref{thm:main construction} with an appropriate choice of the Blaschke products $b_n$. We make repeated use of the following two results.

\begin{lem}\label{lem51}
Let $f$ be a transcendental entire function with an orbit of wandering domains $(U_n)$ arising from applying Theorem~\ref{thm:main construction} with the Blaschke products $(b_n)_{n \geq 0}$ and suppose that there exist $s,t \in U_0$, $N \in \N$ with
\[
f^{\ell_N}(s), f^{\ell_N}(t) \in \overline{D(\kappa_N, r_{\ell_N})},
\]
where the sequences $(\ell_n)$, $(\kappa_n)$ and  $(r_n)$ are as defined in Theorem~\ref{thm:main construction}.
\begin{itemize}
\item[\rm(a)] If $\dist_{G_n}(f^{\ell_n}(s), f^{\ell_n}(t))\lran  0$ and $f^{\ell_n}(s) \neq f^{\ell_n}(t)$, for $n \geq 0$, then $U_0$ is contracting;

\item[\rm(b)] If $\liminf_{n \to \infty} \dist_{G_n}(f^{\ell_n}(s), f^{\ell_n}(t))>0$ and $f:U_n \to U_{n+1}$ has degree greater than 1 for infinitely many $n \in \N$, then $U_0$ is semi-contracting.

\end{itemize}

\end{lem}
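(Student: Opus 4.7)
The plan is to use the double inequality \eqref{eqtn:double inequality} from Theorem~\ref{thm:main construction} to transfer the hypotheses about hyperbolic distances in $G_n$ into hypotheses about hyperbolic distances in $U_{\ell_n}$, then combine this with the monotonicity of $n \mapsto \dist_{U_n}(f^n(s), f^n(t))$ given by the Schwarz--Pick inequality, and finally invoke the trichotomy of Theorem~\ref{thm:Theorem A introduction}. A common preliminary observation for both parts is that the stated hypotheses imply $f^{\ell_n}(s) \ne f^{\ell_n}(t)$ for all sufficiently large~$n$, and hence (since two orbits either coincide forever or never) $f^k(s) \ne f^k(t)$ for all $k \ge 0$, placing $(s,t)$ in $(U_0\times U_0)\setminus E$.

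For part~(a), I would apply the right-hand inequality in \eqref{eqtn:double inequality} together with $K_n\to 1$ and the hypothesis that $\dist_{G_n}(f^{\ell_n}(s), f^{\ell_n}(t)) \to 0$ to conclude that $\dist_{U_{\ell_n}}(f^{\ell_n}(s), f^{\ell_n}(t)) \to 0$. Schwarz--Pick makes $n\mapsto \dist_{U_n}(f^n(s),f^n(t))$ non-increasing, so convergence to $0$ along the subsequence $(\ell_n)$ forces $c(s,t) = \lim_n \dist_{U_n}(f^n(s),f^n(t))=0$. Since $(s,t)\notin E$, Theorem~\ref{thm:Theorem A introduction} rules out cases~(2) and~(3), so $U_0$ must be contracting.

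For part~(b), I would use the left-hand inequality of \eqref{eqtn:double inequality} with $k_n\to 1$ to obtain $\liminf_{n\to\infty} \dist_{U_{\ell_n}}(f^{\ell_n}(s), f^{\ell_n}(t)) > 0$, whence $c(s,t) > 0$ by the monotonicity above. This rules out case~(1). To rule out case~(3) I would exploit the degree hypothesis: whenever $f:U_n\to U_{n+1}$ has degree greater than~$1$, $f$ cannot be a conformal isomorphism between the simply connected hyperbolic domains $U_n$ and $U_{n+1}$, so the rigidity (equality) clause of Schwarz--Pick forces the strict inequality
\[
\dist_{U_{n+1}}(f^{n+1}(s), f^{n+1}(t)) < \dist_{U_n}(f^n(s), f^n(t)),
\]
using that $f^n(s)\ne f^n(t)$. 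Since this strict drop occurs for infinitely many $n$, the decreasing sequence $\dist_{U_n}(f^n(s), f^n(t))$ never stabilises at the common value $c(s,t)$, contradicting the eventually isometric case. Hence $U_0$ must be semi-contracting.

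The main technical point that deserves care is the strict contraction step in part~(b): the fact that a holomorphic proper map of degree $d\ge 2$ between two simply connected hyperbolic domains is strictly contracting at every pair of distinct points. This is the rigidity part of Schwarz--Pick in the two uniformising disc models; a map attaining equality at even one pair of distinct points must lift to a M\"obius transformation and hence be a conformal bijection, contradicting $d\ge 2$. Once this is established, everything else reduces to bookkeeping with the already-proved double inequality and to invoking the trichotomy.
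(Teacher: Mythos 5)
Your proposal is correct and follows essentially the same route as the paper: both parts hinge on the double inequality \eqref{eqtn:double inequality} to transfer the $G_n$-hypotheses to $U_{\ell_n}$-distances, followed by an appeal to the trichotomy in Theorem~\ref{thm:Theorem A introduction}. You supply somewhat more detail than the paper does (making explicit the Schwarz--Pick monotonicity in part~(a) and the Schwarz--Pick rigidity argument excluding the eventually isometric case in part~(b)), but the substance and structure of the argument are the same.
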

\begin{proof}
(a) In this case it follows from the last part of Theorem~\ref{thm:main construction} that
\[
\dist_{U_n}(f^{\ell_n}(s), f^{\ell_n}(t))\lran  0.
\]
It now follows from Theorem~\ref{thm:Theorem A introduction} that the only possibility is for $U_0$ to be contracting.

(b) In this case it follows from the last part of Theorem~\ref{thm:main construction} that
\[
\liminf_{n \to \infty} \dist_{U_n}(f^{\ell_n}(s), f^{\ell_n}(t)) > 0
\]
and so $U_0$ is not contracting. Since $f:U_n \to U_{n+1}$ has degree greater than 1 for infinitely many $n \in \N$, we know that $U_0$ is not eventually isometric, and so it follows from Theorem~\ref{thm:Theorem A introduction} that $U_0$ is semi-contracting.
\end{proof}

\begin{lem}\label{lem52}
Let $f$ be a transcendental entire function with an orbit of wandering domains $(U_n)$ arising from applying Theorem~\ref{thm:main construction} with the Blaschke products $(b_n)_{n \geq 0}$ and let $s \in U_0 $ with
\[
f^{\ell_n}(s) \in G_n, \; \mbox{ for } n \geq 0,
\]
where the sets $G_n$ and the sequence $(\ell_n)$, $n \geq 0$ are as defined in Theorem~\ref{thm:main construction}.
\begin{itemize}
\item[\rm(a)] {\red If}
 $\liminf_{n \to \infty} \operatorname{dist}(f^{\ell_n}(s),\partial G_{n})>0$, then orbits of points in $U_0$ behave as described in Theorem~\ref{ThmC} part~(b).

\item[\rm(b)] If $\operatorname{dist}(f^{\ell_n}(s),\partial G_{n}) \to 0$ as $n \to \infty$, then orbits of points in $U_0$ behave as described in Theorem~\ref{ThmC} part~(c).

\end{itemize}

\end{lem}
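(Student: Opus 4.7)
The plan is to decompose the orbit of any $z\in U_0$ according to whether the index $m$ is of the form $\ell_n$, where $U_m$ is close to the disc $G_n$ of radius~$1$, or otherwise, where $U_m$ is a small disc whose diameter tends to zero. Along the ``otherwise'' subsequence the distance to the boundary automatically tends to zero, so the remaining question at the indices $\ell_n$ reduces to comparing $\operatorname{dist}(f^{\ell_n}(z),\partial U_{\ell_n})$ with $\operatorname{dist}(f^{\ell_n}(s),\partial G_n)$. Finally the Schwarz--Pick inequality and the Koebe distortion theorem are used to transfer the hypothesis on the specific point~$s$ to every $z\in U_0$.

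First, I would check that for every $z\in U_0$ and every $m$ not of the form $\ell_n$ ($n\ge 0$) one has $\operatorname{dist}(f^m(z),\partial U_m)\to 0$. Indeed, such an~$m$ equals either $\ell_n-1$, in which case $V_m=\Delta_n$ has radius $\alpha_n$, or $\ell_n+k+1$ with $0\le k\le n$, in which case $V_m$ has radius $\alpha_{n+1}$; in each case the radius tends to zero as $m\to\infty$ because $\alpha_{n+1}\le\alpha_n/6$. By Theorem~\ref{thm:main construction}(i) we have $U_m\subset D(\zeta_m,R_m)$ with $R_m\sim\rho_m$, so $\operatorname{dist}(f^m(z),\partial U_m)\le 2R_m\to 0$ along this sub-sequence.

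Second, for the subsequence $m=\ell_n$ I would exploit that $\overline{D(\kappa_n,r_{\ell_n})}\subset U_{\ell_n}\subset D(\kappa_n,R_{\ell_n})$ with $r_{\ell_n},R_{\ell_n}\to 1$ (by \eqref{r-4} and \eqref{R-4}). Then $\partial U_{\ell_n}$ lies in the annulus $\{w:r_{\ell_n}\le|w-\kappa_n|\le R_{\ell_n}\}$ around $\partial G_n$, giving the uniform estimate
\[
\bigl|\operatorname{dist}(w,\partial U_{\ell_n})-\operatorname{dist}(w,\partial G_n)\bigr|\le\max\{1-r_{\ell_n},R_{\ell_n}-1\}\to 0,\quad w\in U_{\ell_n}\cap G_n.
\]
Taking $w=f^{\ell_n}(s)$, in case~(a) this gives $\liminf_n\operatorname{dist}(f^{\ell_n}(s),\partial U_{\ell_n})>0$, and in case~(b) it gives $\operatorname{dist}(f^{\ell_n}(s),\partial U_{\ell_n})\to 0$.

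Finally, to promote these statements about~$s$ to every $z\in U_0$, I would apply the Schwarz--Pick inequality to the holomorphic maps $f:U_m\to U_{m+1}$, which gives $\operatorname{dist}_{U_n}(f^n(z),f^n(s))\le\operatorname{dist}_{U_0}(z,s)=:C$ for every~$n$. The Koebe distortion theorem, applied to Riemann maps $\mathbb{D}\to U_n$ sending~$0$ to $f^n(s)$, then shows that $\operatorname{dist}(f^n(z),\partial U_n)$ and $\operatorname{dist}(f^n(s),\partial U_n)$ are comparable with a multiplicative constant depending only on~$C$. Combining this with the first two steps yields part~(b) of Theorem~\ref{ThmC} from hypothesis~(a) of the lemma, and part~(c) of Theorem~\ref{ThmC} from hypothesis~(b). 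The main delicacy will be ensuring the Koebe comparison constant does not depend on~$n$, which is precisely guaranteed by the uniform Schwarz--Pick bound.
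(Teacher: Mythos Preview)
Your proof is correct, and the first two steps match the paper's argument essentially verbatim: the diameters of $U_m$ for $m\ne\ell_n$ go to zero because $\rho_m\to 0$ there and $R_m\sim\rho_m$, and the comparison of $\partial U_{\ell_n}$ with $\partial G_n$ follows from Theorem~\ref{thm:main construction}(i) exactly as you say.

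The difference is in your third step. The paper does not carry out the Schwarz--Pick/Koebe transfer from $s$ to a general $z\in U_0$; instead it simply invokes the trichotomy in Theorem~\ref{ThmC}. Since exactly one of (a), (b), (c) holds, it suffices to exhibit a \emph{single} point whose orbit is incompatible with the other two cases. In hypothesis~(a) of the lemma, the point~$s$ rules out both Theorem~\ref{ThmC}(a) (because of the subsequence $m\ne\ell_n$) and Theorem~\ref{ThmC}(c) (because of the subsequence $m=\ell_n$), forcing~(b); in hypothesis~(b), the point~$s$ has $\operatorname{dist}(f^m(s),\partial U_m)\to 0$ along the full sequence, ruling out both~(a) and~(b). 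Your direct comparability argument is precisely the mechanism underlying the proof of Theorem~\ref{ThmC} in \cite{BEFRS}, so you are in effect reproving part of that classification inside the lemma. This makes your argument more self-contained, at the cost of some length; the paper's version is shorter because the ``for all~$z$'' conclusion is outsourced to the trichotomy.
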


\begin{proof}

We begin by noting that it follows from Theorem~\ref{thm:main construction} that
\begin{equation}\label{Vm}
\phi^m(\Delta_0) = D(\zeta_m, \rho_m) =
\begin{cases}
\Delta_n = D(a_n,\alpha_n), \; \mbox{ if } m=\ell_n -1, \mbox{ where } n \geq 0,\\
G_n = D(\kappa_n,1)\; \mbox{ if } m=\ell_n, \mbox{ where } n \geq 0,\\
D(9k + 4\alpha_{n+1},\alpha_{n+1}) \subset D_k, \; \mbox{ if } m = \ell_n + k+1,  {\red \mbox{ where }  0 \leq k \leq n.}
\end{cases}
\end{equation}
Since we know from Theorem~\ref{thm:main construction} part~(i) that the wandering domains $U_m$ are approximated increasingly well by the sets $\phi_m(\Delta_0)$ as $m \to \infty$, it follows that $\diam U_m \to 0$ as $m \to \infty$ for $m \neq \ell_n$, $n \geq 0$. So, if $s \in U_0$, then
\begin{equation}\label{notln}
\operatorname{dist}(f^{m}(s),\partial U_m) \to 0\;\text{ as } m \to \infty, \; m \neq \ell_n, \; n \geq 0.
\end{equation}

(a) In this case it follows from~\eqref{Vm} together with Theorem~\ref{thm:main construction} part~(i) that
\[
\liminf_{n \to \infty} \operatorname{dist}(f^{\ell_n}(s),\partial U_{\ell_n})>0.
\]
Together with~\eqref{notln}, this implies that orbits of points in {\red $U_0$} behave as described in Theorem~\ref{ThmC} part~(b).

(b) In this case it follows from~\eqref{Vm} together with Theorem~\ref{thm:main construction} part~(i) that
\[
\operatorname{dist}(f^{\ell_n}(s),\partial U_{\ell_n}) \to 0 \;\mbox{ as } n \to \infty.
\]
Together with~\eqref{notln}, this implies that orbits of points in $U_0$ behave as described in Theorem~\ref{ThmC} part~(c).
\end{proof}

%
%
%
%

In some of the examples we make use of the following estimate for the hyperbolic distance in the unit disc.
\begin{obs}\label{obs:hd}
For two points and $r,s \in (0,1)$ with $r<s$ we have that
\[\dist_{\D}(r,s) = \int _{r}^{s} \frac{2dt}{1-t^2},
\]
and so
\begin{equation}\label{hypd}
\log \frac{1-r}{1-s}=  \int _{r}^{s} \frac{dt}{1-t} \leq \dist_{\D}(r,s) = \int _{r}^{s} \frac{2dt}{1-t} =2\log \frac{1-r}{1-s}.
\end{equation}

\end{obs}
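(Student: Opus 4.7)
The plan is to combine the explicit formula for the Poincar\'e density on $\mathbb{D}$ with an elementary pointwise estimate on the integrand.

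First I would invoke the standard fact that the hyperbolic density on $\mathbb{D}$ is $\lambda_{\mathbb{D}}(z) = 2/(1-|z|^2)$, and that the real diameter $(-1,1)$ is a geodesic for the hyperbolic metric on $\mathbb{D}$ (this is immediate from the conformal symmetry $z \mapsto \bar z$, which fixes the real axis and is a hyperbolic isometry). Consequently, for $0<r<s<1$, the hyperbolic geodesic from~$r$ to~$s$ is the real segment $[r,s]$, and integrating $\lambda_{\mathbb{D}}$ along this segment gives
\[
\dist_{\mathbb{D}}(r,s) \;=\; \int_r^s \frac{2\,dt}{1-t^2},
\]
which is the first identity in the statement.

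Next I would derive the two-sided inequality on this integral. Factoring $1-t^2 = (1-t)(1+t)$ and using the trivial bounds $1\le 1+t\le 2$, valid for all $t\in[0,1)$, gives the pointwise estimate
\[
\frac{1}{1-t} \;\leq\; \frac{2}{1-t^2} \;\leq\; \frac{2}{1-t}, \quad\text{for } t\in[r,s].
\]
Integrating term by term over $[r,s]$ and using the antiderivative $\int dt/(1-t) = -\log(1-t)$ yields
\[
\log\frac{1-r}{1-s} \;\leq\; \int_r^s \frac{2\,dt}{1-t^2} \;\leq\; 2\log\frac{1-r}{1-s},
\]
which, combined with the first identity, is exactly~\eqref{hypd}.

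Since this is an \emph{observation} rather than a deeper result, there is no substantive obstacle: the proof reduces to recalling the formula for the Poincar\'e density, noting that real segments of $(-1,1)$ realise hyperbolic distance, and performing one elementary integral estimate.
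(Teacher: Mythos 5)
Your proof is correct and supplies exactly the standard justification that the paper leaves implicit: the Poincar\'e density $2/(1-|z|^2)$, the fact that $(-1,1)$ is a geodesic, and the pointwise bound $\tfrac{1}{1-t}\le \tfrac{2}{1-t^2}\le \tfrac{2}{1-t}$ obtained from $1\le 1+t\le 2$. One small point worth flagging: the middle relation in the paper's display \eqref{hypd} is written as $\dist_{\D}(r,s)=\int_r^s \tfrac{2\,dt}{1-t}$, which must be a typo for $\dist_{\D}(r,s)\le \int_r^s \tfrac{2\,dt}{1-t}$; your chain of inequalities has this in the correct form.
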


We now give the examples that together prove Theorem \ref{realizable}. Examples 1, 2 and 3, which follow, correspond to the three cases of Theorem~\ref{thm:Theorem A introduction}. Within each example we give two functions, corresponding to the two realisable cases of Theorem~\ref{ThmC}.

\begin{ex}[\bf Two contracting wandering domains]
For each of the cases (b) and (c) of Theorem~\ref{ThmC}, there exists a transcendental entire function $f$ having a sequence of bounded, simply connected, oscillating  contracting wandering domains $(U_n)$ with the stated behaviour.

\end{ex}
\begin{proof}

{\bf First example}

We construct an oscillating contracting wandering domain {\red $U_0$} with the behaviour described in Theorem~\ref{ThmC} part~(b) by applying Theorem~\ref{thm:main construction} with  $b_n(z)=z^2$, for $n \geq 0$.

We begin by considering the orbits of points in the disc {\red $D(4,1/12) \subset D(4,r_0)$} under iteration by $\phi$. We note that, if $z \in D(\kappa_n, R_{\ell_n})$, for some $n \geq 0$, then
\[
|\phi^{n+3}(z) - \kappa_{n+1}| = |b_n(z-\kappa_n)| = |z-\kappa_n|^2.
\]
So, if $z \in D(4,1/12)$, then, for $n\ge 0$, we have
\begin{equation}\label{phin}
|\phi^{\ell_{n}}(z) - \kappa_{n}| = |\phi(z) - \kappa_0|^{2^{n}} = |z-7|^{2^{n}}\le (1/12)^{2^n} \to 0, \;\mbox{ as } n\to \infty.
\end{equation}
{\red Next we claim that, if $z \in D(4,1/12)$, then
\begin{equation}\label{ex1}
|f^{\ell_n}(z) - \phi^{\ell_n}(z)| \leq \sum_{i=1}^n \frac{\alpha_i}{3^{n-i}} + \frac{|f(z) - \phi(z)|}{3^n}, \;\mbox{ for } n \geq 0,
\end{equation}
and
\begin{equation}\label{ex0}
|f^{\ell_n}(z) - \kappa_n| \leq \frac14, \;\mbox{ for } n \geq 0.
\end{equation}
}
{\red We prove~\eqref{ex0} and~\eqref{ex1} together using induction.} First, we note that they are true when $n=0$, since {\red if $z\in D(4,1/12)$, then $|f^{\ell_0}(z)-\phi^{\ell_0}(z)| = |f(z)-\phi(z)|\le \eps_0 \le 1/24$, by Theorem~\ref{thm:main construction} part~(ii)}, and $|\phi^{\ell_0}(z)-\kappa_0|=|\phi(z)-\kappa_0|\le 1/12$ by \eqref{phin}.

{\red Next, we suppose that~\eqref{ex1} and~\eqref{ex0} hold} for $n=m \ge 0$. It follows from these two estimates and Lemma~\ref{orberr} part~(b) together with~\eqref{phin}, that if $z\in D(4,1/12)$, then
\begin{eqnarray*}
|f^{\ell_{m+1}}(z) - \phi^{\ell_{m+1}}(z)| & {\red \le} & \alpha_{m+1} + |b_m(f^{\ell_m}(z) - \kappa_m) - b_m(\phi^{\ell_m}(z) - \kappa_m)| \\
& = & \alpha_{m+1} + |(f^{\ell_m}(z) - \kappa_m)^2 - (\phi^{\ell_m}(z) - \kappa_m)^2| \\
&\leq & {\red \alpha_{m+1} + |f^{\ell_m}(z) - \phi^{\ell_m}(z)|\, \left(|f^{\ell_m}(z)-\kappa_m| + |\phi^{\ell_m}(z) - \kappa_m|\right)}\\
& \leq & \alpha_{m+1} + \frac13 |f^{\ell_m}(z) - \phi^{\ell_m}(z)|\\
& \leq & \alpha_{m+1} + \frac13\left( \sum_{i=1}^m \frac{\alpha_i}{3^{m-i}} + \frac{|f(z) - \phi(z)|}{3^m}\right)\\
& = & \sum_{i=1}^{m+1} \frac{\alpha_i}{3^{m+1-i}} + \frac{|f(z) - \phi(z)|}{3^{m+1}},
\end{eqnarray*}
which gives \eqref{ex1} with $n=m+1$, and also
\begin{align*}
|f^{\ell_{m+1}}(z)-\kappa_{m+1}|&\le |f^{\ell_{m+1}}(z) - \phi^{\ell_{m+1}}(z)| + |\phi^{\ell_{m+1}}(z)-\kappa_{m+1}|\\
&\le \sum_{i=1}^{m+1} \frac{\alpha_i}{3^{m+1-i}} + \frac{|f(z) - \phi(z)|}{3^{m+1}} + \left(\frac{1}{12}\right)^{2^{m+1}}\\
&\le \sum_{i=1}^n \frac{1}{3^{m+1-i}6^i} + \frac{1}{12}\,\frac{1}{3^{m+1}} + \left(\frac{1}{12}\right)^{2^{m+1}}\\
&\le \frac{1}{6}+ + \frac{1}{12}\,\frac{1}{3^{m+1}} + \left(\frac{1}{12}\right)^{2^{m+1}}< \frac14,
\end{align*}
which gives \eqref{ex0} with $n=m+1$.

Since {\red $\alpha_n \leq 1/6^n$} for $n \geq 0$, it follows from \eqref{ex1} that if $z \in D(4,1/12)$, then
\[
|f^{\ell_n}(z) - \phi^{\ell_n}(z)| \to 0 \;\mbox{ as } n \to \infty.
\]

Together with~\eqref{phin}, this implies that, if $z \in D(4,1/12)$, then
\begin{equation}\label{fphi}
|f^{\ell_n}(z) - \kappa_n| \to 0 \;\mbox{ as } n \to \infty.
\end{equation}

We now use~\eqref{fphi} together with Lemma~\ref{lem51} and Lemma~\ref{lem52} to show that the wandering domain {\red ${U_0}$} has the required properties. First we take $s,t \in D(4,1/12)$ such that $f(s),f(t) \in D(\kappa_0, r_1)$ with $f^{\ell_n}(s) \neq f^{\ell_n}(t)$, for $n \geq 0$. Since $G_n = D(\kappa_n,1)$, for $n \geq 0$, it follows from~\eqref{fphi} that
\[
\dist_{G_n}(f^{\ell_n}(s), f^{\ell_n}(t)) \to  0 \;\text{ as } n\to \infty,
\]
and hence, by Lemma~\ref{lem51} {\red part~(a)}, $U_0$ is contracting.

Also, it follows from~\eqref{fphi} that
\[
\lim_{n \to \infty} \operatorname{dist}(f^{\ell_n}(s),\partial G_{n})=1
\]
and hence, by Lemma~\ref{lem52} part~(a), orbits of points in $U_0$ behave as described in Theorem~\ref{ThmC} part~(b).

{\bf Second example}
We construct an oscillating contracting wandering domain $U_0$ with the behaviour described in Theorem~\ref{ThmC} part~(c) by applying Theorem~\ref{thm:main construction} with $b_n(z)= \left(\frac{z+1/3}{1+z/3}\right)^2$, for $n \geq 0$. Let $x,y \in U_0$ be as in Lemma \ref{lem:a2} part~(a). Since $G_n = D(\kappa_n,1)$, for $n \geq 0$, it follows from Lemma \ref{lem:a2} part~(a) and {\red the hyperbolic metric estimate}~\eqref{hypd} that
\begin{align*}
\dist_{G_{n}}(f^{\ell_{n}}(x),f^{\ell_{n}}(y))&\le 2\log\frac{\kappa_{n}+1-f^{\ell_{n}}(x)}{\kappa_{n}+1-f^{\ell_{n}}(y)}\\
&= 2\log \left(1+ \frac{f^{\ell_{n}}(t)-f^{\ell_{n}}(x)}{\kappa_n+1-f^{\ell_{n}}(y)}\right)\\
&\sim 2\log\left(1+ \frac{O(1)/n^{3/2}}{c(n+1)^{1/2}} \right)\\
&= \frac{O(1)}{n}\;\text{ as }n\to \infty.
\end{align*}

It now follows from Lemma~\ref{lem51} part~(a) that $U_0$ is contracting. We also know from Lemma~\ref{lem:a2} part~(a) that
\[
\dist(f^{\ell_n}(x),\partial G_n) \to 0 \; \mbox{ as } n \to \infty,
\]
and so it follows from Lemma~\ref{lem52} part~(b) that orbits of points in $U_0$ behave as described in Theorem~\ref{ThmC} part~(c).
\end{proof}

\begin{ex}[\bf Two semi-contracting wandering domains]
For each of the cases (b) and (c) of Theorem~\ref{ThmC}, there exists a transcendental entire function $f$ having a sequence of bounded, simply connected, oscillating  semi-contracting wandering domains $(U_n)$ with the stated behaviour.
\end{ex}
\begin{proof}

{\bf First example}

We construct an oscillating semi-contracting wandering domain with the behaviour described in Theorem~\ref{ThmC} part~(b) by applying Theorem~\ref{thm:main construction} with $b_n (z)= \tilde{\mu_n}(\mu_n (z)^2)$, for $n \geq 0$, where
\[
\mu_n(z)= \frac{z+s_n}{1+s_nz} \; \mbox{ and } \; \tilde{\mu_n}(z)=\frac{z-s_n^2}{1-s_n^2z}.
\]
{\red We shall use the estimates for $b_n$ obtained in Lemma~\ref{lem:semi}, and once again put}
\[
 \lambda_n = \frac{2s_n}{1+s_n^2},\quad\text{for } n\ge 0.
\]
We now choose $s_n \in (0,1)$ with $s_n \to 1$ as $n\to \infty$ {\red so quickly} that
\begin{equation}\label{lambdan}
\prod_{j=0}^{\infty}\lambda_{j} \geq 8/9\; \mbox{ and }\; \prod_{j=0}^{\infty}\frac{2}{1+\lambda_{j}} \leq 4/3.
\end{equation}

We first consider the orbit of the point $4$ under iteration by $f$, noting that
\begin{equation}\label{philn4}
\phi^{\ell_n}(4) = \kappa_n, \;\mbox{ for } n \geq 0.
\end{equation}
{\red It follows from \eqref{philn4},} Lemma~\ref{orberr} part~(b) and \eqref{semi1} in Lemma~\ref{lem:semi} that, for $n \geq 0$,
\begin{eqnarray*}
{\red |f^{\ell_{n+1}}(4) - \kappa_{n+1}|} & = &|f^{\ell_{n+1}}(4) - \phi^{\ell_{n+1}}(4)|\\
&\leq & \alpha_{n+1} + |b_n(f^{\ell_n}(4) - \kappa_n) - b_n(\phi^{\ell_n}(4) - \kappa_n)|\\
& = & \alpha_{n+1} + |b_n(f^{\ell_n}(4) - \kappa_n)|\\
& \leq & \alpha_{n+1} + |f^{\ell_n}(4) - \kappa_n|.
\end{eqnarray*}
Together with Theorem~\ref{thm:main construction} part~(ii) and \eqref{philn4}, this implies that, for $n \geq 0$,
\begin{equation}\label{f4}
|f^{\ell_n}(4) - \kappa_n)| \leq |f(4) - \kappa_0| +  \sum_{i=1}^{n-1} \alpha_i = |f(4) - \phi(4)| +  \sum_{i=1}^{n-1} \alpha_i \leq {\red \frac{1}{24}} + \sum_{i=1}^{n-1} \frac{1}{6^i} < \frac{1}{4}.
\end{equation}

Now we consider the orbit of {\red $19/4=4\tfrac34$} under $f$. {\red Once again, we} begin by considering  the orbit under $\phi$. We claim that
\begin{equation}\label{92}
\phi^{\ell_n}(19/4) - \kappa_n \geq \frac{3}{4} \prod_{i=0}^{n-1} \lambda_i.
\end{equation}
We prove~\eqref{92} by induction first noting that it holds for $n=0$, since $\phi^{\ell_0}(19/4)=\phi(19/4)=\kappa_0 + 3/4$. Next suppose that~\eqref{92} holds for $n=m$. Then, by \eqref{semi1},
\begin{eqnarray*}
 \phi^{\ell_{m+1}}(19/4) - \kappa_{m+1} & = & b_n(\phi^{\ell_{m}}(19/4) - \kappa_{m}) \\
 & \geq & \lambda_m (\phi^{\ell_{m}}(19/4) - \kappa_{m})\\
 & \geq & \frac{3}{4} \lambda_m \prod_{i=0}^{m-1} \lambda_i = \frac{3}{4}\prod_{i=0}^{m} \lambda_i.
\end{eqnarray*}
Thus~\eqref{92} holds for $n=m+1$ and hence, by induction, for all $n \geq 0$. Together with~\eqref{lambdan}, this implies that
\begin{equation}\label{194}
\phi^{\ell_n}(19/4) - \kappa_n \geq \frac{2}{3}, \mbox{ for } n \geq 0.
\end{equation}

Next we claim that, for $n \geq 0$,
\begin{equation}\label{f92}
|f^{\ell_n}(19/4) - \phi^{\ell_n}(19/4)| \leq \sum_{i=1}^{n} \alpha_{i} \prod_{j=i}^{n-1}\frac{2}{1 + \lambda_j} + \prod_{i=0}^{n-1}\frac{2}{1 + \lambda_i} |f(19/4) - \phi(19/4)|.
\end{equation}

We prove~\eqref{f92} by induction noting that it holds for $n=0$. Next suppose that~\eqref{f92} holds for $n=m\ge 0$. Then it follows from Lemma~\ref{orberr} part~(b) and \eqref{semi2} that
\begin{eqnarray*}
 |f^{\ell_{m+1}}(19/4) -  \phi^{\ell_{m+1}}(19/4)| & \leq & \alpha_{m+1} + |b_m(f^{\ell_m}(19/4) - \kappa_m) - b_m(\phi^{\ell_n}(19/4) - \kappa_m)|\\
 & \leq & \alpha_{m+1} + \frac{2}{1 + \lambda_m}|f^{\ell_{m}}(19/4) -  \phi^{\ell_{m}}(19/4)|\\
 & \leq & \alpha_{m+1} + \frac{2}{1 + \lambda_m}\left( \sum_{i=1}^{m} \alpha_{i} \prod_{j=i}^{m-1}\frac{2}{1 + \lambda_j} + \prod_{i=0}^{m-1}\frac{2}{1 + \lambda_i} |f(19/4) - \phi(19/4)|\right)\\
 & = & \sum_{i=1}^{m+1} \alpha_{i} \prod_{j=i}^{m}\frac{2}{1 + \lambda_j} + \prod_{i=0}^{m}\frac{2}{1 + \lambda_i} |f(19/4) - \phi(19/4)|.
\end{eqnarray*}
Thus~\eqref{f92} holds for $n=m+1$ and hence, by induction, for all $n \geq 0$. It now follows from Theorem~\ref{thm:main construction} part~(ii) and~\eqref{lambdan} that
\begin{equation}\label{f92e}
|f^{\ell_n}(19/4) - \phi^{\ell_n}(19/4)| \leq \prod_{j=0}^{\infty}\frac{2}{1 + \lambda_j} \left( {\red \sum_{i=1}^{\infty}\frac{1}{6^i} + \frac{1}{24}} \right)
\le \frac{1}{4} \prod_{j=0}^{\infty}\frac{2}{1 + \lambda_j} \leq \frac{1}{3}.
\end{equation}

It follows from~\eqref{194}, \eqref{f92e} and \eqref{f4} that, for $n \geq 0$,
\begin{eqnarray*}
|f^{\ell_n}(19/4) - f^{\ell_n}(4)| & = & |\phi^{\ell_n}(19/4) - \kappa_n + f^{\ell_n}(19/4) - \phi^{\ell_n}(19/4) + \kappa_n - f^{\ell_n}(4)|\\
& \geq &  |\phi^{\ell_n}(19/4) - \kappa_n| - |f^{\ell_n}(19/4) - \phi^{\ell_n}(19/4)| - |f^{\ell_n}(4) - \kappa_n|\\
& \geq & \frac{2}{3} - \frac{1}{3} - \frac{1}{4} = \frac{1}{12}.
\end{eqnarray*}
Since $G_n = D(\kappa_n,1)$, for $n \geq 0$, together with~\eqref{f4} this implies that
\[
\liminf_{n \to \infty} \dist_{G_n}(f^{\ell_n}(19/4), f^{\ell_n}(4)) > 0.
\]

Also, it follows from Theorem~\ref{thm:main construction} part~(iv) that $f:U_{\ell_n} \to U_{\ell_{n+1}}$ has degree greater than 1, for $n\ge0$. So, by Lemma~\ref{lem51} part~(b), $U_0$ is semi-contracting.

Finally, it follows from~\eqref{f4} that
\[
\liminf_{n \to \infty} \dist(f^{\ell_n}(4), \partial G_n) > 0
\]
and so, by Lemma~\ref{lem52} part~(a), orbits of points in $U_0$ behave as described in Theorem~\ref{ThmC} part~(b).


{\bf Second example}

We construct an oscillating contracting wandering domain $U_0$ with the behaviour described in Theorem~\ref{ThmC} part~(c) by applying Theorem~\ref{thm:main construction} with $b_n(z)=\left(\frac{z+1/2}{1+z/2}\right)^2$, for $n \geq 0$. Let $x,y \in U$ be as in Lemma \ref{lem:a2} part~(b). Since $G_n = D(\kappa_n,1)$, for $n \geq 0$, it follows from Lemma~\ref{lem:a2} part~(b) and {\red the hyperbolic metric estimate}~\eqref{hypd} that {\red for $n$ sufficiently large we have}
\begin{align*}
\dist_{G_{n}}(f^{\ell_{n}}(x),f^{\ell_{n}}(y))&\geq \log\frac{\kappa_{n}+1-f^{\ell_{n}}(x)}{\kappa_{n}+1-f^{\ell_{n}}(y)}\\
&\geq \log \frac{c\lambda^n(1-1/10)}{c\lambda^{n+1}(1+1/10)}\\
&= \log \frac{9}{11\lambda}=\log \frac{27}{22}{\red >0},
\end{align*}
{\red recalling that $\lambda=2/3$}.

Also, it follows from Theorem~\ref{thm:main construction} that $f: U_{\ell_n} \to U_{\ell_n+1}$ has degree greater than 1, for $n\ge 0$, so $U_0$ is semi-contracting by Lemma~\ref{lem51} part~(b). Finally, we know from Lemma~\ref{lem:a2} part~(b) that
\[
\dist(f^{\ell_n}(x),\partial G_n) \to 0 \;\mbox{ as } n \to \infty
\]
and so, by Lemma~\ref{lem52} part~(b), orbits of points in $U_0$ behave as described in Theorem~\ref{ThmC} part~(c).
\end{proof}

\begin{ex}[\bf Two eventually isometric wandering domains]
For each of the cases (b) and (c) of Theorem~\ref{ThmC}, there exists a transcendental entire function $f$ having a sequence of bounded, simply connected, oscillating eventually isometric wandering domains $(U_n)$ with the stated behaviour.
\end{ex}
\begin{proof}

{\bf First example}
We construct an eventually isometric wandering domain $U_0$ with the behaviour described in Theorem~\ref{ThmC} part~(b) by applying Theorem~\ref{thm:main construction} with $b_n(z) = z$, for $n \geq 0$.

Since $b_n$ is univalent, for $n \geq 0$,  it follows from Theorem \ref{thm:main construction} part~(iv) that $f:U_m \to U_{m+1}$ is also univalent, for $m \geq 0$. Thus $U$ is eventually isometric.

We now consider the orbit of $4$ under iteration by $f$. We claim that, for $n \geq 0$,

\begin{equation}\label{4orb}
|f^{\ell_n}(4)-\phi^{\ell_n}(4)| \leq \sum_{i=1}^{n} \alpha_i + |f(4)-\phi(4)|.
\end{equation}

We prove~\eqref{4orb} by induction, noting that it is true for $n=0$. Next, suppose that~\eqref{4orb} holds for $n=m\ge 0$. Then it follows from Lemma~\ref{orberr} part~(b) that
\begin{eqnarray*}
|f^{\ell_{m+1}}(4) - \phi^{\ell_{m+1}}(4)| & = & \alpha_{m+1} + |b_m(f^{\ell_m}(4) - \kappa_m) - b_m(\phi^{\ell_m}(4) - \kappa_m)| \\
& = & \alpha_{m+1} + |f^{\ell_m}(4) - \phi^{\ell_m}(4)| \\
& = & \alpha_{m+1} + \sum_{i=1}^{m} \alpha_i + |f(4)-\phi(4)|\\
& = &  \sum_{i=1}^{m+1} \alpha_i + |f(4)-\phi(4)|.
\end{eqnarray*}
Thus~\eqref{4orb} holds for $n=m+1$ and hence, by induction, for all $n \geq 0$.

Since $\alpha_n \leq 1/6^n$ for $n \geq 0$, and $|f(4) - \phi(4)| \leq 1/24$, by Theorem~\ref{thm:main construction} part~(ii), it follows from~\eqref{4orb} that
\[
|f^{\ell_{n}}(4) - \phi^{\ell_{n}}(4)| \leq 1/2, \;\mbox{ for } n \geq 0.
\]
Since $\phi^{\ell_{n}}(4) = \kappa_n$ and $G_n = D(\kappa_n,1)$, for $n \geq 0$, it follows from Lemma~\ref{lem52} part~(a) that orbits of points in $U_0$ behave as described in Theorem~\ref{ThmC} part~(b).\\

{\bf Second example}

 We construct an eventually isometric wandering domain $U_0$ with the behaviour described in Theorem~\ref{ThmC} part~(c) by applying Theorem~\ref{thm:main construction} with $b_n(z)= b(z) = \frac{z+5/6}{1+5 z/6}$, for $n \geq 0$.

 Since $b_n$ is univalent, for $n \geq 0$, it follows from Theorem \ref{thm:main construction} part~(iv) that $f:U_m \to U_{m+1}$ is also univalent, for $m \geq 0$. Thus $U_0$ is eventually isometric.

 We now consider the orbit of 4 under iteration by $\phi$, noting that
 \[
 \phi^{\ell_n}(4) = \kappa_n + b^n(0), \;\mbox{ for } n\geq 0.
 \]
 The Blaschke product $b$ has an attracting fixed point at 1 and we have
 \begin{equation}
 b^n(0) \to 1 \mbox{ as } n \to \infty \;\mbox{ and }\; b^n(0) \geq 5/6, \mbox{ for } n \in \N,
 \end{equation}
 and so

 \begin{equation}\label{bn0}
 \dist(\phi^{\ell_n}(4),\partial G_n) \to 0 \mbox{ as } n \to \infty, \;\mbox{ and } \; \phi^{\ell_n}(4) - \kappa_n \geq 5/6,\; \mbox{ for } n \in \N.
 \end{equation}
 {\red We also note that} if $0 \le z_1, z_2 <1$, then
 \begin{equation}\label{z1z2}
 |b(z_1) - b(z_2)| = \left| \frac{11(z_1-z_2)}{(6+5z_1)(6+5z_2)}  \right| \leq \frac{11 |z_1-z_2|}{36}.
 \end{equation}

 Next we take a point $x \in D(4,r_0)$ such that $f(x) = \kappa_0$, which is possible by Theorem~\ref{thm:main construction} part~(i), and consider the orbit of~$x$ under iteration by $f$. We claim that, for $n \geq 0$,

\begin{equation}\label{sorb}
|f^{\ell_n}(x)-\phi^{\ell_n}(4)| \leq \sum_{i=1}^{n} \alpha_i \left( \frac{11}{36} \right)^{n-i} \leq \frac{1}{2^n}.
\end{equation}
We prove~\eqref{sorb} by induction. First, we note that it is true if $n=0$, {\red since $f(x)-\phi(4)=0$}. Next, suppose that~\eqref{sorb} holds for $n=m\ge 0$. Then it follows from~\eqref{bn0} and ~\eqref{sorb} that $f^{\ell_{m}}(x) > 0$ and so it follows from Lemma~\ref{orberr} part~(b) together with~\eqref{z1z2} that, {\red for $m \ge 0$}
\begin{eqnarray*}
|f^{\ell_{m+1}}(x) - \phi^{\ell_{m+1}}(4)| & \le & \alpha_{m+1} + |b_m(f^{\ell_m}(x) - \kappa_m) - b_m(\phi^{\ell_m}(4) - \kappa_m)| \\
& \leq & \alpha_{m+1} + \frac{11|f^{\ell_m}(x) - \phi^{\ell_m}(4)| }{36}\\
& \leq & \alpha_{m+1} + \sum_{i=1}^{m} \alpha_i \left( \frac{11}{36} \right)^{m+1-i}\\
& = & \sum_{i=1}^{m+1} \alpha_i \left( \frac{11}{36} \right)^{m+1-i}\\
& \leq & \sum_{i=1}^{m+1} \frac{1}{6^i}\left( \frac{11}{36} \right)^{m+1-i} \leq{\red \frac{m+1}{3^{m+1}} }\leq \frac{1}{2^{m+1}}.
\end{eqnarray*}
Thus~\eqref{sorb} holds for $n=m+1$ and hence, by induction, for all $n \geq 0$.

It follows from~\eqref{sorb} together with~\eqref{bn0} that
\[
\dist(f^{\ell_n}(s),\partial G_n) \to 0 \;\mbox{ as } n \to \infty
\]
and so, by Lemma~\ref{lem52} part~(b), orbits of points in $U_0$ behave as described in Theorem~\ref{ThmC} part~(c).
\end{proof}

\bibliographystyle{alpha}
\bibliography{Oscillating}

\newcommand{\etalchar}[1]{$^{#1}$}
\newcommand{\noopsort}[1]{}
\begin{thebibliography}{BEF{\etalchar{+}}19}

\bibitem[Bak76]{Baker76}
Irvine~N. Baker.
\newblock An entire function which has wandering domains.
\newblock {\em J. Austral. Math. Soc. Ser. A}, 22(2):173--176, 1976.

\bibitem[Bak84]{Baker-wd}
Irvine~N. Baker.
\newblock Wandering domains in the iteration of entire functions.
\newblock {\em Proc. London Math. Soc. (3)}, 49(3):563--576, 1984.

\bibitem[BEF{\etalchar{+}}19]{BEFRS}
Anna~Miriam Benini, Vasiliki Evdoridou, Nuria Fagella, Phil Rippon, and Gwyneth
  Stallard.
\newblock Classifying simply connected wandering domains.
\newblock {\em Preprint, arXiv:1910.04802}, 2019.

\bibitem[Ber93]{bergweiler93}
Walter Bergweiler.
\newblock Iteration of meromorphic functions.
\newblock {\em Bull. Amer. Math. Soc.}, 29(2):151--188, 1993.

\bibitem[Bis15]{bishop15}
C.~J. Bishop.
\newblock Constructing entire functions by quasiconformal folding.
\newblock {\em Acta Math.}, 214(1):1--60, 2015.

\bibitem[BRS13]{BRS}
Walter Bergweiler, Philip~J. Rippon, and Gwyneth~M. Stallard.
\newblock Multiply connected wandering domains of entire functions.
\newblock {\em Proc. Lond. Math. Soc. (3)}, 107(6):1261--1301, 2013.

\bibitem[Dev90]{Devaney-entire}
Robert~L. Devaney.
\newblock Dynamics of entire maps.
\newblock In {\em Workshop on {D}ynamical {S}ystems ({T}rieste, 1988)}, volume
  221 of {\em Pitman Res. Notes Math. Ser.}, pages 1--9. Longman Sci. Tech.,
  Harlow, 1990.

\bibitem[dMvS89]{deMelovanStrien}
W.~de~Melo and S.~van Strien.
\newblock A structure theorem in one-dimensional dynamics.
\newblock {\em Ann. of Math. (2)}, 129(3):519--546, 1989.

\bibitem[EL87]{pathex}
A.~\`E. Er\"{e}menko and M.~Ju. Ljubich.
\newblock Examples of entire functions with pathological dynamics.
\newblock {\em J. London Math. Soc. (2)}, 36(3):458--468, 1987.

\bibitem[FH06]{fagella-henriksen}
N\'uria Fagella and Christian Henriksen.
\newblock Deformation of entire functions with {B}aker domains.
\newblock {\em Discrete Contin. Dyn. Syst.}, 15(2):379--394, 2006.

\bibitem[FJL18]{FJL}
N\'uria Fagella, Xavier Jarque, and Kirill Lazebnik.
\newblock Univalent wandering domains in the {E}remenko-{L}yubich class.
\newblock {\em J. Anal. Math. (to appear). https://arxiv.org/abs/1711.10629},
  2018.

\bibitem[Her84]{Herman}
Michael-R. Herman.
\newblock Exemples de fractions rationnelles ayant une orbite dense sur la
  sph\`ere de {R}iemann.
\newblock {\em Bull. Soc. Math. France}, 112(1):93--142, 1984.

\bibitem[MPS20]{marshi}
David Mart\'i-Pete and Mitsuhiro Shishikura.
\newblock Wandering domains for entire functions of finite order in the
  {E}remenko--{L}yubich class.
\newblock {\em Proc. Lond. Math. Soc. (3)}, 120(2):155--191, 2020.

\bibitem[Sul85]{sullivan}
Dennis Sullivan.
\newblock Quasiconformal homeomorphisms and dynamics. {I}. {S}olution of the
  {F}atou-{J}ulia problem on wandering domains.
\newblock {\em Ann. of Math. (2)}, 122(3):401--418, 1985.

\end{thebibliography}

\hspace*{-0.4cm} School of Mathematics and Statistics, The Open University, Walton Hall, Milton Keynes  MK7 6AA, UK\\
\textit{Emails:\;}vasiliki.evdoridou@open.ac.uk,\;phil.rippon@open.ac.uk,\;
gwyneth.stallard@open.ac.uk
\end{document}